\newtheorem{thm}{Theorem}[section]
\newtheorem{prop}[thm]{Proposition}
\newtheorem{lem}[thm]{Lemma}
\newtheorem{quest}[thm]{Question}
\newtheorem{claim}[thm]{Claim}
\theoremstyle{definition}
\newtheorem{defn}[thm]{Definition}
\newtheorem{remark}[thm]{Remark}
\newcommand{\N}{\mathbb N}
\newcommand{\Dom}{\text{Dom}}
\newcommand{\asi}{\text{asi}}
\newcommand{\asdim}{\text{asdim}}
\theoremstyle{remark}
\let\c@equation\c@thm
\numberwithin{equation}{section}
\newcommand{\rest}{\restriction}
\newcommand\mbf{\mathbf}
\newcommand\mcal{\mathcal}
\newcommand{\LOCAL}{\textsf{LOCAL }}
\begin{document}

\author{Jan Grebík and Cecelia Higgins}

\address{JG: Faculty of Informatics, Masaryk University, Botanicka 68A, 60200 Brno, Czech Republic}

\address{CH: Department of Mathematics, University of California, Los Angeles, 520 Portola Plaza, Los Angeles, CA 90095, United States}
    
\title{Complexity of Finite Borel Asymptotic Dimension}
\date{\today}

\begin{abstract}
    We show that the set of locally finite Borel graphs with \emph{finite Borel asymptotic dimension} is $\mbf{\Sigma}^1_2$-complete.
    The result is based on a combinatorial characterization of finite Borel asymptotic dimension for graphs generated by a single Borel function.
    As an application of this characterization, we classify the complexities of digraph homomorphism problems for this class of graphs.
\end{abstract}

\maketitle

\begin{section}{Introduction}

The study of \emph{Borel graphs} and \emph{Borel equivalence relations} has figured prominently in descriptive set theory over the past several decades (see, e.g., \cite{kst1999,km2004,jackson2019,mathe2019,km2020,pikhurko2021,bernshteyn2022,gv2024,kechris2024}).
One of the central themes in this area is the notion of \emph{Borel hyperfiniteness}.
A Borel graph $\mcal{G}$ is \emph{Borel hyperfinite} if it can be written as a countable increasing union of Borel graphs with finite connected components, i.e., $\mcal{G} = \bigcup_{n \in \N} \mcal{G}_n$, where for each $n \in \N$, $\mcal{G}_n$ is a Borel graph with finite connected components and $\mathcal{G}_n\subseteq\mathcal{G}_{n+1}$. 

Hyperfinite structures arise in various areas of mathematics and theoretical computer science as non-trivial structures of minimal complexity (see, e.g., \cite{connes1976,cfw1981,hkl1990,km2004,matui2012,lovasz2012,elek2018}).
In particular, in the theory of countable Borel equivalence relations, the class of Borel hyperfinite equivalence relations is the minimum non-trivial element in the Borel reducibility hierarchy by the Glimm--Effros dichotomy \cite{glimm1961,effros1965,effros1980,hkl1990}.
However, many basic problems about Borel hyperfiniteness remain open (see \cite{kechris2024} for a recent survey).
Among these problems, one of the deepest centers around the \emph{complexity} of hyperfiniteness: While it is immediate from the definition that hyperfiniteness is a $\mbf{\Sigma}^1_2$ notion, it is unknown whether the set of codes of Borel hyperfinite graphs is \emph{$\mbf{\Sigma}^1_2$-complete}, that is, maximally complex \cite{djk1994} (see Section~\ref{sec:Prelim} for the formal definitions of the complexity notions).
The $\mathbf{\Sigma}^1_2$-completeness of Borel hyperfiniteness would rule out a number of conjectured dichotomy theorems \cite{kechris2024}.

In \cite{cjmst2023}, the authors introduce the notion of \emph{Borel asymptotic dimension} as an extension of Gromov's classical notion of asymptotic dimension for metric spaces \cite{gromov1993} to the definable context; they also show that if a locally finite Borel graph has finite Borel asymptotic dimension, then it is Borel hyperfinite \cite[Theorem~7.1]{cjmst2023}.
Despite its recency, Borel asymptotic dimension has become an invaluable tool in the study of Borel hyperfiniteness (see, e.g., \cite{by2023,nv2023,is2024}).

While determining the exact complexity of Borel hyperfiniteness remains a major open problem, our main result shows that the notion of finite Borel asymptotic dimension is $\mbf{\Sigma}^1_2$-complete.

	\begin{thm}
    \label{thm:main}
		The set of locally finite Borel graphs having finite Borel asymptotic dimension is $\mbf{\Sigma}^1_2$-complete.
	\end{thm}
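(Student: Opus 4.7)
Containment in $\mathbf{\Sigma}^1_2$ is immediate from unpacking the definition: finite Borel asymptotic dimension asserts the existence of a Borel sequence of uniformly bounded covers of $\mcal{G}$ with bounded multiplicity as the scale grows, and the verification that such a sequence has the required properties is $\mathbf{\Pi}^1_1$. The entire task is therefore to prove $\mathbf{\Sigma}^1_2$-hardness, and the plan is to do this by first cutting down to a well-behaved sub-class and then exploiting a tight combinatorial description on that sub-class.

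Following the abstract, the first step is to restrict attention to \emph{function graphs} $\mcal{G}_f$ generated by a single Borel function $f:X\to X$, whose connected components are classically very simple---each is a backward tree based either on a cycle or on an infinite forward ray, so classically $\asdim(\mcal{G}_f)\le 1$. The central technical step is to establish a \emph{combinatorial characterization} of finite Borel asymptotic dimension for such $\mcal{G}_f$: roughly, that $\mcal{G}_f$ has finite Borel asymptotic dimension if and only if there is a Borel assignment of tame structure along orbits, for instance a Borel function $X\to\N$ whose fibres induce uniformly bounded, forward-stable partitions of each orbit, or equivalently a Borel selector for a naturally attached $\mathbf{\Pi}^1_1$ equivalence relation on $X$. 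The easy direction of the characterization should come from thickening such witnesses into covers at every scale. The reverse direction, which I expect to be the main obstacle, asks one to extract from an arbitrary Borel sequence of covers a single coherent Borel selector; I would attempt this by performing orbit-by-orbit surgery on the covers and then applying the Luzin--Novikov uniformization theorem to Borelly glue the local choices.

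With the characterization in hand, the second step is to build a Borel map $x\mapsto f_x$ from a parameter space $Y$ into the space of Borel function graphs such that existence of a Borel witness for $\mcal{G}_{f_x}$ is equivalent to $x$ belonging to some fixed $\mathbf{\Sigma}^1_2$-complete set. The natural complete sets to aim at are of the form ``there exists a Borel uniformization of $\Phi$'' for a $\mathbf{\Pi}^1_1$ relation $\Phi$, since these are $\mathbf{\Sigma}^1_2$ by definition and, being of the same ``$\exists$ Borel witness to a $\mathbf{\Pi}^1_1$ condition'' shape as the characterization, should admit a direct reduction. The plan is to encode the parameter $x$ into the orbit structure of $f_x$ so that producing the Borel witness demanded by the characterization is tantamount to producing a Borel uniformization for $\Phi_x$.

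Two further points require care. First, the combinatorial characterization must be \emph{sharp enough} that ``$\exists$ Borel witness'' really is the exact $\mathbf{\Sigma}^1_2$ statement to encode; if the witnesses live in too complex a space, the reduction may either fail to be Borel or fail to capture the right complexity class, so I would tailor the witness notion to sit in a standard Borel space from the outset. Second, the Borelness of $x\mapsto f_x$ must be verified step by step; this is routine but nontrivial, and I would handle it by presenting the whole family $\{f_x\}_{x\in Y}$ as a single Borel function on $Y\times X$ and checking that each construction step is Borel-uniform in the parameter.
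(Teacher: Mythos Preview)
Your high-level architecture matches the paper's exactly: restrict to graphs $\mcal{G}_f$ generated by a single Borel function, prove a combinatorial characterization of finite Borel asymptotic dimension for such graphs, and then show that this characterization is $\mbf{\Sigma}^1_2$-complete via a parametrized family $(f_\sigma)_\sigma$. However, the proposal is too vague at the two points where the actual content lives, and in one place it points in a direction that will not pay off.

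First, the characterization. The paper's witness is concrete: $\asdim_B(\mcal{G}_f)<\infty$ iff for every $r$ there is a Borel \emph{$r$-forward-independent hitting set} for $f$ (equivalently, a Borel digraph homomorphism $\dig\to\mcal{D}_r$). Your ``Borel selector for a naturally attached $\mbf{\Pi}^1_1$ equivalence relation'' is not a recognizable object here, and your plan to obtain the hard direction by Luzin--Novikov is misplaced: the paper does it by a one-line explicit construction (take $H=\{x\in A: f^j(x)\notin A\text{ for }1\le j\le r\}$ from a cover piece $A$), and in fact the characterization is proved even for uncountable-to-one $f$, where Luzin--Novikov is unavailable. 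Second, the hardness step. Your plan to ``encode $x$ into the orbit structure so that the witness amounts to a Borel uniformization of $\Phi_x$'' is the right shape but omits the engine. The paper invokes the Frisch--Shinko--Vidny\'anszky criterion \cite[Theorem~3.1]{fsv2024}: one shows that the shift $S$ on $[\N]^\N$ admits \emph{no} Borel homomorphism to $\mcal{D}_r$ (since that would give finite Borel chromatic number to $\mcal{G}_S$), while on every non-dominating section $\Dom_x$ there \emph{is} such a homomorphism, uniformly Borel in $x$. That uniform construction on $\Dom$ is the only genuinely new analytic input, and your plan does not anticipate it; without it, there is no obvious candidate for the parametrized family $(f_\sigma)_\sigma$, and the reduction you sketch remains a wish rather than a proof.
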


Theorem~\ref{thm:main} builds on the celebrated paper of Todorčević and Vidnyánszky \cite{tv2021}, who showed that locally finite Borel graphs having finite Borel chromatic number form a $\mbf{\Sigma}^1_2$-complete set, and adds to the growing body of complexity results in Borel combinatorics (see, e.g., \cite{bcggrv2021,thornton2022,fsv2024}).

\medskip

The main tool we use to prove Theorem~\ref{thm:main} involves transforming the geometric question of whether a graph generated by a single Borel function has finite Borel asymptotic dimension (see Section~\ref{sec:FinAsDim} for the definition) into a purely combinatorial question concerning the existence of certain forward-recurrent sets.
A Borel function $f$ on a standard Borel space $X$ is \emph{acyclic} if $f^k(x) \neq x$ for every $x \in X$ and $k \in \N^+ = \N \setminus \{0 \}$.
We write $\mcal{G}_f$ for the Borel graph generated by $f$, that is, the graph on $X$ where vertices $x, y \in X$ are adjacent if and only if $x \neq y$ and either $f(x) = y$ or $f(y) = x$.
A set $H \subseteq X$ is {\it hitting for $f$} if, for each $x \in X$, there is $k \in \N^+$ such that $f^k(x) \in H$. Given $r \in \N^+$, the set $H$ is {\it $r$-forward-independent for $f$} if $f^k(x) \in H$ implies that $k > r$ for any $x \in H$ and $k \in \N^+$.

    \begin{thm}\label{thm:EquivalentCharFinAsDim}
        Let $X$ be a standard Borel space, and let $f : X \rightarrow X$ be an acyclic Borel function. Then the following are equivalent:
        \begin{enumerate}
            \item [(i)] $\asdim_B(\mcal{G}_f)$ is finite.
            \item [(ii)] $\asdim_B(\mcal{G}_f)= 1$.
            \item [(iii)] For each $r \in \N^+$, there is a Borel $r$-forward-independent hitting set for $f$.
        \end{enumerate}
    \end{thm}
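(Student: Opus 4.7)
The implication $(ii) \Rightarrow (i)$ is immediate since $1 < \infty$, so the task reduces to $(i) \Rightarrow (iii)$ and $(iii) \Rightarrow (ii)$. Throughout, acyclicity of $f$ is essential: the graph $\mcal{G}_f$ is a forest, and each connected subset is a subtree admitting a unique forward-exit vertex --- the vertex whose $f$-image lies outside the subset. (Uniqueness follows from a short LCA argument in the ambient tree, using that the $f$-path from any vertex to the least common ancestor is forced to stay in the subset.)

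For $(i) \Rightarrow (iii)$, I would fix $r > 0$ and use $\asdim_B(\mcal{G}_f) \leq d$ at an appropriately scaled-up version of $r$ to obtain a Borel cover $\mcal{U} = \mcal{U}_0 \sqcup \cdots \sqcup \mcal{U}_d$ of $X$ consisting of uniformly diameter-bounded Borel pieces, with each colour class $\mcal{U}_i$ pairwise $r$-separated. After refining to $\mcal{G}_f$-connected components, each piece $U$ admits its unique forward-exit vertex $\rho(U)$. I would then construct $H$ by incorporating roots from the colour classes in order, greedily refusing to add $\rho(U)$ whenever its forward distance to or from a previously-chosen element of $H$ would be $\leq r$. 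The $r$-separation within each class gives $r$-forward-independence of roots of that class automatically; hitting holds because every vertex is a forward descendant of some piece's root, and a bookkeeping argument using $d < \infty$ shows that greedy rejection cannot eliminate too many roots along any forward trajectory.

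For $(iii) \Rightarrow (ii)$, I would fix $r > 0$ and use (iii) with a parameter $N$ chosen large relative to $r$ to obtain a Borel $N$-forward-independent hitting set $H$. For each $x \in H$, let $T_x = \{y \in X : x \text{ is the first forward hit of } H \text{ from } y\}$; these form a Borel partition of $X$ into subtrees rooted at the points of $H$, and the ``next-$H$-point'' map on $H$ is an acyclic Borel function with all forward-jumps of length $> N$. The desired cover witnessing $\asdim_B(\mcal{G}_f) \leq 1$ at scale $r$ would be produced by Borel $2$-colouring the pieces so that pieces adjacent along the next-$H$-point function receive different colours. A direct LCA computation in the ambient forest, combined with $N$-forward-independence, shows that same-colour pieces are at graph-distance strictly greater than $r$, giving the required $r$-separated colour classes.

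The main obstacle lies in this last implication: condition (iii) guarantees $r$-forward-independence and hitting but provides no a priori bound on the first-hit function $\tau \colon X \to \N$, so the pieces $T_x$ can be infinitely deep and thus fail to be uniformly diameter-bounded; moreover, short backward paths from $H$-points converging at a common descendant in $H$ could in principle push the $r$-multiplicity above $2$. The crux of the proof must therefore be a refinement procedure --- likely invoking (iii) iteratively at finer scales and Borel-selecting additional hitting points at controlled backward depth inside unbounded pieces --- producing an $H$ whose partition has uniformly bounded pieces while preserving enough forward-independence to sustain the $r$-separated $2$-colouring. The Borel $2$-colouring itself, compatible with the induced functional graph structure on $H$, is a further Borel combinatorial condition that must be extracted from (iii) holding simultaneously at all scales.
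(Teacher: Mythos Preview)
Your outline for $(i)\Rightarrow(iii)$ heads in a reasonable direction but is more elaborate than necessary, and the ``bookkeeping argument'' is where the work would lie and is not supplied. The paper avoids greedy selection entirely: it first decomposes $X$ into $f$-invariant Borel pieces $A_i=\bigcap_k\bigcup_{n}f^{-(k+n)}(U_i)$ (points whose forward orbit visits $U_i$ infinitely often), and on each $A_i$ the single set $U_i\cap A_i$ is already hitting. Then the hitting set is given in one line:
\[
H=A\setminus\bigcup_{j=1}^{r}f^{-j}(A),
\]
the ``last'' points of each $\mcal{F}_r(A)$-class in the forward direction. Finite diameter of the $\mcal{F}_r(A)$-classes immediately gives hitting; no cross-colour interaction or rejection procedure is needed.

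The genuine gap is in $(iii)\Rightarrow(ii)$. You correctly identify both obstructions to your scheme --- the basins $T_x$ are not uniformly bounded, and a Borel $2$-colouring of the induced next-$H$-point function graph is itself a nontrivial Borel combinatorial demand (indeed, equivalent to another instance of (iii) for a new function). Your proposed fix, an iterative refinement invoking (iii) at finer scales to trim the basins, does not address the second obstruction and is not how the paper proceeds. The paper's key idea is to abandon the attempt to colour the basins $T_x$ and instead colour \emph{individual vertices} by their forward distance to $H$: with $s=6t$ and $r=4s^2$, take an $r$-forward-independent hitting set $H$, let $k(x)$ be the least $k$ with $f^k(x)\in H$, and set
\[
c_H(x)=\lfloor k(x)/s\rfloor \bmod 2
\]
away from $H$, with a careful local correction near each $z\in H$ (using an interval decomposition of $\{0,\dots,r/2-1\}$ into blocks of length $s$ or $s+1$) to synchronise the parity across the boundary between consecutive basins. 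The resulting monochromatic connected components are ``bands'' of width at most $2s+2$ regardless of basin depth, giving uniform diameter bounds directly; only a single scale of (iii) is used, and no auxiliary $2$-colouring of a function graph is required.
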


\begin{remark}\label{rem:Bounded}
Condition (iii) in Theorem~\ref{thm:EquivalentCharFinAsDim} is satisfied if, for instance, $f$ is bounded-to-one.
This follows from \cite[Corollary~5.3]{kst1999}.
Furthermore, (iii) is also satisfied when the connectedness relation of $\mcal{G}_f$ admits a Borel transversal.
\end{remark}

Borel graphs with finite Borel asymptotic dimension enjoy certain definable combinatorial properties that are not shared by Borel hyperfinite graphs in general.
For instance, Brooks's theorem, a fundamental result in the theory of graph colorings, fails for Borel colorings \cite{marks2016}, even when the graph under consideration is Borel hyperfinite \cite{cjmst2020}.
However, a version of Brooks's theorem for Borel colorings, and in fact Borel versions of many local coloring problems that in general only admit solutions up to null sets, can be recovered for Borel graphs with finite Borel asymptotic dimension \cite{bw2023}.

As an application of the techniques used to prove Theorems~\ref{thm:main} and \ref{thm:EquivalentCharFinAsDim}, we derive a new complexity result for a homomorphism problem for the class of Borel \emph{directed graphs} (\emph{digraphs} for short) generated by a single Borel function.
Given a finite digraph $H$, we follow Thornton \cite{thornton2022} and write $\operatorname{CSP}_B(H)$ for the set (of codes) of Borel digraphs that admit a Borel homomorphism to $H$, and we also define $\operatorname{CSP}^{\operatorname{function}}_B(H)$ to be the set (of codes) of Borel digraphs \emph{generated by a single Borel function} that admit a Borel homomorphism to $H$.
Note that when investigating $\operatorname{CSP}^{\operatorname{function}}_B(H)$, we can without loss of generality assume that $H$ is \emph{sinkless}, i.e., that every vertex of $H$ has at least one outgoing edge.

Recall that a finite digraph $H$ is \emph{ergodic} if there are $v\in V(H)$ and $k_0\in \mathbb{N}$ such that, for every $k\ge k_0$, there is a directed path of length $k$ that starts and ends at $v$.\footnote{Here, the motivation for the term ``ergodic'' originates in the theory of Markov chains, where a transition digraph is said to be ergodic if it is irreducible and aperiodic, see, for example, \cite{Norris1997}.
In our context, the term is used for digraphs that contain a strongly connected component that is ergodic in the stochastic sense.}
A \emph{loop} is a directed edge of the form $(v, v)$ for some $v \in V(H)$. Note that any digraph containing a loop is ergodic.

    \begin{thm}\label{thm:Complexity}
        Let $H$ be a finite sinkless digraph.
        \begin{enumerate}
            \item [(i)] The digraph $H$ contains a loop if and only if $\operatorname{CSP}^{\operatorname{function}}_B(H)$ contains all digraphs generated by a single Borel function.
            In particular, the set $\operatorname{CSP}^{\operatorname{function}}_B(H)$ is $\mbf{\Pi}^1_1$.
            \item [(ii)] If $H$ is ergodic and has no loops, then $\operatorname{CSP}^{\operatorname{function}}_B(H)$ is $\mbf{\Sigma}^1_2$-complete.
            \item [(iii)] If $H$ is not ergodic, then $\operatorname{CSP}^{\operatorname{function}}_B(H)$ is $\mbf{\Pi}^1_1$.
        \end{enumerate}
    \end{thm}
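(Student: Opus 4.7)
I would handle the three parts separately, with the main work falling in (ii).

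For part (i), the forward implication is immediate: if $H$ has a loop at some vertex $v$, the constant Borel map $x \mapsto v$ is a homomorphism from any digraph into $H$, so $\operatorname{CSP}^{\operatorname{function}}_B(H)$ contains every digraph generated by a Borel function. Conversely, if $H$ has no loop, the one-vertex digraph given by the Borel function $f(x)=x$ has a loop and therefore cannot map into $H$. For the $\mbf{\Pi}^1_1$ bound, I would verify directly from the standard coding that the set of codes of Borel digraphs generated by a Borel function is $\mbf{\Pi}^1_1$, and observe that when $H$ has a loop this is exactly $\operatorname{CSP}^{\operatorname{function}}_B(H)$.

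For part (ii), the $\mbf{\Sigma}^1_2$ upper bound is direct from the definition. For $\mbf{\Sigma}^1_2$-hardness, my plan is to reduce from the $\mbf{\Sigma}^1_2$-complete set given by Theorem~\ref{thm:main}. Inspecting its proof (which proceeds via Theorem~\ref{thm:EquivalentCharFinAsDim}), one can arrange the outputs of the reduction to be digraphs of the form $\mcal{G}_{f_a}$ for an acyclic Borel $f_a$. The key equivalence to establish is: $\mcal{G}_{f_a}$ admits a Borel homomorphism to $H$ iff $\mcal{G}_{f_a}$ has finite Borel asymptotic dimension. Fix $v \in V(H)$ and let $k_0$ be the ergodicity constant at $v$. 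For the $(\Leftarrow)$ direction, Theorem~\ref{thm:EquivalentCharFinAsDim} supplies a Borel $(k_0-1)$-forward-independent hitting set $S$; along each $f_a$-orbit, consecutive elements of $S$ are separated by some $k \geq k_0$, and ergodicity provides a closed walk of length $k$ at $v$ in $H$, selected Borel-ly (e.g., lexicographically) to assemble the homomorphism. The $(\Rightarrow)$ direction is the main technical obstacle: given a Borel homomorphism $c : \mcal{G}_{f_a} \to H$, the set $c^{-1}(v)$ is a Borel $(L-1)$-forward-independent hitting set, where $L$ is the minimum cycle length at $v$, and the plan is to exploit the particular structural features of the $\mcal{G}_{f_a}$ produced by Theorem~\ref{thm:main}'s reduction in order to bootstrap this single hitting set into $r$-forward-independent hitting sets for every $r$, recovering finite Borel asymptotic dimension by Theorem~\ref{thm:EquivalentCharFinAsDim}.

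For part (iii), the plan is to extract a $\mbf{\Pi}^1_1$ characterization of the existence of a Borel homomorphism $\mcal{G}_f \to H$ when $H$ is non-ergodic. Non-ergodicity forces either a decomposition of $H$ into several strongly connected components, or, within each SCC, a common factor greater than $1$ in the cycle lengths. A Borel homomorphism must respect this periodic decomposition: each $f$-orbit eventually settles inside one SCC, and any $f$-cycle must have length divisible by the appropriate period. This compatibility is a universal, hence $\mbf{\Pi}^1_1$, condition on the orbits and cycles of $f$; once it is met, a Borel homomorphism can be built canonically (again via lexicographic selection), eliminating the essential $\mbf{\Sigma}^1_2$ existential quantifier over Borel objects. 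The hardest step overall remains the $(\Rightarrow)$ direction in (ii), where the expectation is that Theorem~\ref{thm:main}'s reduction can be tuned to output function-generated graphs for which finite Borel asymptotic dimension is already detected by a single forward-independent hitting set at the appropriate scale.
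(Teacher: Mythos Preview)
Your approach to (ii) has a genuine gap. You propose to reduce from Theorem~\ref{thm:main} and then argue that, for the specific outputs $\mcal{G}_{f_a}$, admitting a Borel homomorphism to $H$ is equivalent to having finite Borel asymptotic dimension. The $(\Leftarrow)$ direction is fine and is essentially the content of the paper's Proposition~\ref{pr:FlexibleLCL}. But the $(\Rightarrow)$ direction does not go through: a Borel homomorphism $c$ to $H$ yields at best an $(L-1)$-forward-independent set for each fibre $c^{-1}(v)$ (where $L$ is the minimal closed-walk length in $H$), and such a fibre need not be hitting, since orbits can avoid any fixed vertex. More seriously, there is no mechanism for bootstrapping a single $(L-1)$-forward-independent hitting set into $r$-forward-independent hitting sets for \emph{all} $r$, which is what Theorem~\ref{thm:EquivalentCharFinAsDim} demands. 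The ``structural features'' you allude to are never specified, and for the shift-like functions produced by the underlying reduction no such feature is available.

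The paper sidesteps this entirely: rather than reducing from Theorem~\ref{thm:main}, it applies the Frisch--Shinko--Vidny\'anszky machinery \emph{directly} to the $H$-homomorphism problem. The two inputs required are (a) that $\overrightarrow{\mcal{G}}_S$ admits no Borel homomorphism to $H$, and (b) that each $\Dom_x$ does, uniformly in $x$. Ingredient (b) follows from the uniform $\ell_0$-forward-independent hitting sets on $\Dom_x$ constructed in the proof of Theorem~\ref{thm:ComplexityFIH}, combined with Proposition~\ref{pr:FlexibleLCL}. Ingredient (a) is exactly the Galvin--Prikry argument the paper gives in part (i): any Borel $V(H)$-colouring of the shift is monochromatic on some forward orbit, forcing a loop in $H$. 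Your one-point fixed-point argument for the converse in (i), while valid for that biconditional, discards precisely the tool needed to run (ii).

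For (iii), your sketch is in the right spirit, but the paper takes a shorter route: after reducing (via Claim~\ref{cl:Observe}) to the case where $H$ is a disjoint union of strongly connected non-ergodic components, it simply invokes Thornton's result that $\operatorname{CSP}_B(H)$ is $\mbf{\Pi}^1_1$ for such $H$, from which the bound on $\operatorname{CSP}^{\operatorname{function}}_B(H)$ is immediate.
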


    \begin{figure}
        \centering
        \includegraphics[width=0.6\textwidth]{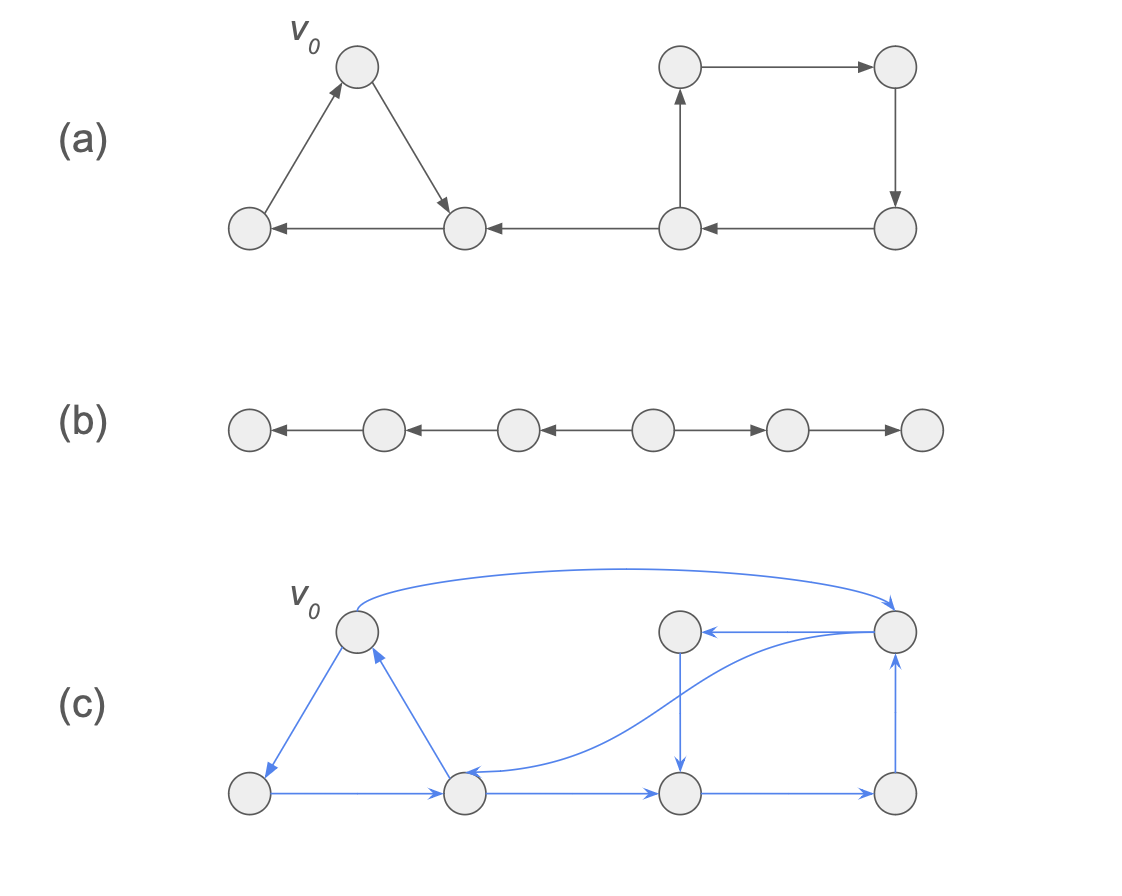}
        \caption{(a) A directed graph $H$ such that $\operatorname{CSP}^{\operatorname{function}}_B(H)$ is $\mbf{\Pi}^1_1$. (b) An abstract walk $p$. (c) The directed graph $H^p$, which has the property that $\operatorname{CSP}^{\operatorname{function}}_B(H^p)$ is $\mbf{\Sigma}^1_2$-complete, showing that $\operatorname{CSP}_B(H)$ is $\mbf{\Sigma}^1_2$-complete as well}
        \label{fig:h_p_graph}
    \end{figure}

\begin{remark}[Connections to CSPs]
\label{rem:FunCSPvsCSP}
    Theorem~\ref{thm:Complexity} is motivated by the classical theory of constraint satisfaction problems (CSPs); see, for example, \cite{brady2025} for a comprehensive overview of the subject.
    A (fixed-template) constraint satisfaction problem takes the following form: Given a finite structure $H$, how complicated is it to decide whether a finite structure in the same signature admits a homomorphism to $H$? The target structure $H$ is called the \emph{template}, and the set of finite structures admitting a homomorphism to $H$ is denoted $\operatorname{CSP}(H)$. 
    The well-known \emph{CSP dichotomy conjecture}, now a theorem, states that every CSP is either solvable in polynomial time or \textsf{NP}-complete.
    
    In their foundational paper, Feder and Vardi \cite{fv1999} showed that every CSP is polynomial-time equivalent to a digraph CSP, i.e., a CSP whose template is a digraph; therefore, resolving the full conjecture amounts to resolving it just for digraph CSPs.
    A special case of the CSP dichotomy in the case when the template is a \emph{sinkless and sourceless} digraph was established by Barto, Kozik, and Niven \cite{bkn2009}.
    Later, the full CSP dichotomy conjecture was confirmed independently by Bulatov \cite{bulatov2017} and by Zhuk \cite{zhuk2017,zhuk2020}.

    Recently, Thornton~\cite{thornton2022} initiated the study of the complexity of CSPs in measurable combinatorics.
    More precisely, given a finite digraph~$H$, we may consider, as above, the set $\operatorname{CSP}_B(H)$ (of codes) of Borel digraphs that admit a Borel homomorphism to~$H$.
    Among other results, Thornton~\cite{thornton2022} proved a Borel analogue of the CSP dichotomy for sinkless and sourceless digraphs; in particular, when $H$ is sinkless and sourceless, $\operatorname{CSP}_B(H)$ is either $\mathbf{\Pi}^1_1$ or $\mathbf{\Sigma}^1_2$-complete.
    Moreover, these two cases correspond exactly to the classical ones: $\operatorname{CSP}_B(H)$ is $\mathbf{\Pi}^1_1$ when $\operatorname{CSP}(H)$ is solvable in polynomial time, and $\operatorname{CSP}_B(H)$ is $\mathbf{\Sigma}^1_2$-complete when $\operatorname{CSP}(H)$ is \textsf{NP}-complete.
\end{remark}

\begin{remark}[$\operatorname{CSP}_B(H)$ vs $\operatorname{CSP}^{\operatorname{function}}_B(H)$]
Note that, when working with $\operatorname{CSP}^{\operatorname{function}}_B(H)$, we do not impose restrictions on the templates themselves; instead, we consider a subclass of digraphs on which the existence of Borel homomorphisms into $H$ is queried. For any digraph $H$, since $\mathbf{\Sigma}^1_2$ is an upper bound for the complexity of $\operatorname{CSP}_B(H)$, if $\operatorname{CSP}^{\operatorname{function}}_B(H)$ is $\mathbf{\Sigma}^1_2$-complete, then $\operatorname{CSP}_B(H)$ is $\mathbf{\Sigma}^1_2$-complete as well. The converse does not hold in general (see Figure~\ref{fig:h_p_graph} and its description for a counterexample).
However, it is possible that a weaker form of the converse holds; see the next paragraph and Section~\ref{sec:OpenProbs}.

Suppose now that $H$ is both sinkless and sourceless.
Let $p$ be an \emph{abstract walk}, i.e., a finite sequence of arrows as in Figure~\ref{fig:h_p_graph}(b).
Define $H^p$ to be the \emph{$p$-power of $H$}, i.e., the directed graph on $V(H)$ where there is a directed edge from $x$ to $y$ if there is a realization in $H$ of the walk $p$ from $x$ to $y$ (see Figure~\ref{fig:h_p_graph}(c) for an example).
Since $H$ is sinkless and sourceless, it can be easily seen that $H^p$ is also sinkless and sourceless.
If $\operatorname{CSP}^{\operatorname{function}}_B(H^p)$ is $\mbf{\Sigma}^1_2$-complete for some walk $p$, then so is $\operatorname{CSP}_B(H)$; this follows from the fact that, for any function $f$, the directed graph $\overrightarrow{\mathcal{G}}_f$ generated by $f$ admits a (Borel) homomorphism to $H^p$ if and only if the directed graph formed from $\overrightarrow{\mathcal{G}}_f$ by replacing each directed edge with the path $p$ admits a (Borel) homomorphism to $H$ (we note that this construction is definable in the codes).
We leave as an open problem whether the converse holds; see Section~\ref{sec:OpenProbs}.
\end{remark}

    \begin{remark}[Connections to the \LOCAL model]
    Theorem~\ref{thm:Complexity} provides a connection with the \LOCAL model of distributed computing and also serves as a specific instance of an interesting general phenomenon:
    For ``flexible'' problems such as $3$-coloring, there is a non-trivial class of graphs for which it is easy to produce solutions in a distributed way, but it is difficult to determine, for general graphs, whether a solution exists; while for ``rigid'' problems such as $2$-coloring, it is impossible to construct solutions on any non-trivial class of graphs, but it is easy to check, for general graphs, whether a solution exists.

    To be precise, recall first that any \emph{locally checkable labeling (LCL)} problem on oriented paths can be thought of as a digraph homomorphism problem $\Pi_H$ to a finite sinkless digraph $H$ \cite{bhk+2017}.
    The classification of LCL problems on oriented paths in the \LOCAL model \cite{bhk+2017} states that:
        \begin{enumerate}
            \item [(i)] If $H$ contains a loop, then $\Pi_H$ can be solved in $0$ rounds of communication in the \LOCAL model.
            \item [(ii)] If $H$ is ergodic and has no loops, then $\Pi_H$ requires $\Theta(\log^*(n))$ rounds of communication in the \LOCAL model.
            \item [(iii)] If $H$ is not ergodic, then $\Pi_H$ requires $\Theta(n)$ rounds of communication in the \LOCAL model.
        \end{enumerate}
    Comparing with Theorem~\ref{thm:Complexity}, we see that the existence of an efficient non-trivial \LOCAL algorithm for solving $\Pi_H$ on oriented paths is equivalent to $\operatorname{CSP}^{\operatorname{function}}_B(H)$ being a $\mbf{\Sigma}^1_2$-complete set.
    If no efficient \LOCAL algorithm exists, then $\operatorname{CSP}^{\operatorname{function}}_B(H)$ is $\mbf{\Pi}^1_1$.
    \end{remark}

The paper is organized as follows. In Section~\ref{sec:Prelim}, we provide some preliminaries. In Section~\ref{sec:Complexity}, we prove the main complexity result that we use throughout the rest of the paper. The proof of Theorem~\ref{thm:Complexity} is given in Section~\ref{sec:DigraphHom}. In Section~\ref{sec:FinAsDim}, we prove Theorems~\ref{thm:main} and \ref{thm:EquivalentCharFinAsDim}. We conclude with some open questions in Section~\ref{sec:OpenProbs}.

\section{Preliminaries}
\label{sec:Prelim}

\newcommand{\dig}{\overrightarrow{\mathcal{G}}_f}
\newcommand{\digS}{\overrightarrow{\mathcal{G}}_S}

Let $X$ be a standard Borel space.
Given a Borel function $f:X\to X$, we write $\mcal{G}_f$ for the Borel graph generated by $f$ and $\dig$ for the Borel \emph{directed} graph (digraph) generated by $f$.

Recall that $[\N]^{\N}$ denotes the space of infinite subsets of $\N$. Each point $x \in [\N]^{\N}$ may be identified with its increasing enumeration $x = (x(n))_{n \in \N}$.
The \emph{shift function} $S:[\N]^\N\to [\N]^\N$ is defined by
$$S(x)(n)=x(n+1) \text{ for every } n\in \N,$$
and the \emph{shift graph} $\mcal{G}_S$ is the (undirected) graph on $[\N]^{\N}$ generated by $S$.

For every $x,y\in [\N]^\N$, we write $y\le^\infty x$ if the set $\{n\in \N:y(n)\le x(n)\}$ is infinite. We define
$$\Dom=\{(x,y)\in [\N]^\N\times [\N]^\N:y\le^\infty x\}.$$

\medskip

Given sets $X, Y$ and a set $B\subseteq X\times Y$, we write $B_x=\{y\in Y:(x,y)\in B\}$ for each $x \in X$.

Let $R$ be a Borel relation on $X$, let $S$ be a Borel relation on a standard Borel space $Y$, and assume that $R$ and $S$ have the same arity $d \in \N$.
A Borel map $\varphi:X\to Y$ is a \emph{homomorphism from $(X,R)$ to $(Y,S)$}, or from $R$ to $S$ for short, if 
$$(x_0,\dots,x_{d-1})\in R \ \Rightarrow \ (f(x_0),\dots,f(x_{d-1}))\in S$$
for every $(x_i)_{i=0}^{d-1}\in X^d$.

\medskip

A finite digraph $H=(V(H),E(H))$ is \emph{sinkless} if every vertex has an outgoing edge and \emph{sourceless} if every vertex has an ingoing edge.
A \emph{directed path} in $H$ is a sequence of vertices $(v_0,\dots v_k)\subseteq V(H)$ such that $(v_i,v_{i+1})\in E(H)$ for every $0\le i\le k-1$.
A \emph{strong connectivity component of $H$} is an inclusion-maximal set $A\subseteq V(H)$ such that, for any vertices $v, w \in A$, there is a directed path from $v$ to $w$ in $H \rest A$.
We say that $H$ is \emph{strongly connected} if $V(H)$ is a strong connectivity component. 
Note that every finite sinkless digraph contains at least one strong connectivity component since it contains some vertex $v$ for which there is a directed path from $v$ to itself.

\medskip

\noindent
{\bf ${\bf \Sigma}^1_2$-completeness.}
The projective hierarchy provides a general framework for the study of complexity problems in measurable combinatorics.
Recall that, given a Polish space $X$, a set $A \subseteq X$ is $\mbf{\Sigma}^1_2$ if it is the projection of a ${\bf \Pi}^1_1$ (co-analytic) set (see \cite[Chapter~V]{kechris1995}).
For example, it is easy to see that the set of (codes of) locally finite Borel graphs having finite Borel chromatic number is $\mbf{\Sigma}^1_2$.
More generally, given any local graph coloring problem, the set of (codes of) locally finite Borel graphs admitting a solution to the problem is $\mbf{\Sigma}^1_2$.
We refer the reader to \cite{moschovakis2009} and \cite[Section~1]{fsv2024} for an overview of the coding method.

\begin{defn}[${\bf \Sigma}^1_2$-completeness]
    Let $A$ be a subset of a Polish space $X$.
    We say that $A$ is \emph{${\bf \Sigma}^1_2$-hard} if for every Polish space $Y$ and every ${\bf \Sigma}^1_2$ set $B\subseteq Y$, there is a Borel map $f:Y\to X$ such that $f^{-1}(A) = B$, i.e., $f(y)\in A$ if and only if $y\in B$.
    A set $A\subseteq X$ is called \emph{${\bf \Sigma}^1_2$-complete} if it is simultaneously ${\bf \Sigma}^1_2$-hard and ${\bf \Sigma}^1_2$.
\end{defn}

As mentioned in the introduction, Todor\v{c}evi\'{c} and Vidny\'{a}nszky \cite{tv2021} showed that the set of codes of locally finite Borel graphs that have finite Borel chromatic number is a ${\bf \Sigma}^1_2$-complete set.
In our arguments, we use a general result for proving ${\bf \Sigma}^1_2$-completeness that builds on \cite{tv2021} and was derived recently by Frisch, Shinko and Vidny\'{a}nszky \cite[Theorem~3.1]{fsv2024}.
We refer the reader to \cite[Section~1]{fsv2024} for the definition of Borel $L$-structure.

\begin{thm}[Theorem~3.1, \cite{fsv2024}]\label{thm:FSV}
    Let $\mathcal{H}$ be a Borel $L$-structure on some Polish space $Z$, and assume that there exist a Borel $L$-structure $\mathcal{G}$ on $[\mathbb{N}]^\mathbb{N}$ that does not admit a Borel homomorphism to $\mathcal{H}$ and a Borel map $\Phi: \Dom\to Z$ so that, for each $x$, we have that $\Phi_x$  is a homomorphism from $\mathcal{G}\upharpoonright \Dom_x$ to $\mathcal{H}$. 
    Then the Borel $L$-structures that admit a Borel homomorphism to $\mathcal{H}$ form a ${\bf \Sigma}^1_2$-complete set.
\end{thm}

In fact, we use a slight strengthening of Theorem~\ref{thm:FSV} that follows from the proof of \cite[Theorem~3.1]{fsv2024}.
Given an $L$-structure $\mathcal{G}$ on $[\mathbb{N}]^\mathbb{N}$, write $\mathcal{G}'$ for the $L$-structure on $\N^{\mathbb{N}}\times [\mathbb{N}]^\mathbb{N}$ where each vertical section is a copy of $\mathcal{G}$ and no additional elements are related.

\begin{thm}\label{thm:FSVversionTwo}
    Let $\mathcal{H}$ be as in Theorem~\ref{thm:FSV}.
    Then there is a Borel set $B\subseteq \mathbb{N}^\mathbb{N}\times \mathbb{N}^\mathbb{N}\times [\mathbb{N}]^\mathbb{N}$ such that the set
   \begin{equation*}
        \{\sigma\in \N^\N: \text{ there is a Borel homomorphism from } (\mcal{G}' \rest B_{\sigma}) \text{ to } \mcal{H} \}
    \end{equation*}
    is ${\bf \Sigma}^1_2$-complete.
\end{thm}

\section{Complexity of forward-independent hitting sets}
\label{sec:Complexity}

In this section, we use Theorem~\ref{thm:FSVversionTwo} to deduce the following theorem, which immediately implies that the set (of codes) of Borel functions admitting Borel forward-independent hitting sets is $\mbf{\Sigma}^1_2$-complete.
Our main result, Theorem~\ref{thm:main}, is a corollary of Theorem~\ref{thm:ComplexityFIH} together with Theorem~\ref{thm:EquivalentCharFinAsDim}, which is proved in Section~\ref{sec:FinAsDim}.

\begin{thm}\label{thm:ComplexityFIH}
    There are a standard Borel space $X$ and a finite-to-one acyclic Borel function
    $$f:\mathbb{N}^\mathbb{N}\times X\to \mathbb{N}^\mathbb{N}\times X$$
    with the property that, for every $\sigma\in \mathbb{N}^\mathbb{N}$, there exists $f_\sigma: X\to X$ such that $$f(\sigma,x)=(\sigma,f_\sigma(x)) \text{ for every } (\sigma,x)\in \mathbb{N}^\mathbb{N}\times X,$$
    and
\begin{equation}\label{eq:AllComplete}
    \{\sigma\in \mathbb{N}^\mathbb{N}: \forall r\in \mathbb{N}^+ \ f_\sigma \ \text{\normalfont admits a Borel } r \text{\normalfont-forward-independent hitting set} \}
\end{equation}
is $\mbf{\Sigma}^1_2$-complete.

Additionally, for every $r\in \mathbb{N}^+$, there is a Borel function $f$ as above such that
\begin{equation}\label{eq:OneComplete}
    \{\sigma\in \mathbb{N}^\mathbb{N}:  f_\sigma \ \text{\normalfont admits a Borel } r \text{\normalfont-forward-independent hitting set} \}
\end{equation}
is $\mbf{\Sigma}^1_2$-complete.
\end{thm}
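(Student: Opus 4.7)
The plan is to derive Theorem~\ref{thm:ComplexityFIH} as an application of the general $\mbf{\Sigma}^1_2$-completeness criterion of Frisch, Shinko, and Vidnyánszky \cite[Theorem~3.1]{fsv2024}. That criterion takes as input a Borel combinatorial problem together with a suitable ``hardness witness'' --- a Borel object for which the intended solution concept provably fails --- and returns a parametrization over $\N^{\N}$ whose ``has Borel solution'' set is $\mbf{\Sigma}^1_2$-complete, with the underlying hard set typically being $\Dom$ or a close relative (which explains the presence of $\Dom$ and $\le^\infty$ in the preliminaries). My work then splits into three parts: (a) exhibit appropriate hardness witnesses for the $r$-forward-independent hitting set property, (b) verify that the Borel function produced by the criterion can be arranged to be finite-to-one, acyclic, and fibered over $\N^{\N}$, and (c) treat the quantified and fixed-$r$ statements (\ref{eq:AllComplete}) and (\ref{eq:OneComplete}) separately.

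For (a), the natural witness is the shift $S:[\N]^{\N}\to [\N]^{\N}$, restricted to a Borel $S$-invariant subspace $X\subseteq [\N]^{\N}$ chosen so that $S\rest X$ is finite-to-one while the shift graph on $X$ still has infinite Borel chromatic number. The central combinatorial input required is: for every $r\in \N^+$, the map $S\rest X$ admits no Borel $r$-forward-independent hitting set. I will establish this by a direct marker-based argument: assuming such a hitting set $H$ exists, the return-time function to $H$ together with Kechris-Solecki-Todorčević-style colorings of antichains in the shift graph extract a finite Borel coloring of $\mcal{G}_S$, contradicting the classical fact $\chi_B(\mcal{G}_S)=\infty$. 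The argument must be carried out uniformly for each fixed $r$ in order to support both (\ref{eq:AllComplete}) and (\ref{eq:OneComplete}).

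For (b), the FSV parametrization $\sigma\mapsto f_\sigma$ comes fibered over $\N^{\N}$ by construction, and since its building blocks are drawn from $S\rest X$ (acyclic, finite-to-one, direction-preserving), both properties are inherited by the parametrized map $f(\sigma,x)=(\sigma,f_\sigma(x))$. For (c), statement (\ref{eq:AllComplete}) follows by directly applying the criterion to the witness from (a). For (\ref{eq:OneComplete}) with a fixed $r$, one runs the construction with the witness instantiated at scale $r$: since (a) supplies hardness at each individual $r$, this yields a separate function $f$ whose hitting-set property at precisely level $r$ is $\mbf{\Sigma}^1_2$-complete.

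The main obstacle I foresee is the fixed-$r$ version of (a). The ``for all $r$'' non-existence of hitting sets for $S\rest X$ would follow \emph{a posteriori} from Theorem~\ref{thm:EquivalentCharFinAsDim} combined with $\chi_B(\mcal{G}_S)=\infty$, but invoking this is circular here, as Theorem~\ref{thm:main} is ultimately obtained by combining Theorem~\ref{thm:ComplexityFIH} with Theorem~\ref{thm:EquivalentCharFinAsDim}. The marker-based argument must therefore be executed by hand at each fixed $r$. A secondary technical issue is the choice of $X$: some thinning of $[\N]^{\N}$ is needed to make $S$ finite-to-one, and one must confirm that the Borel chromatic number of the restricted shift graph is not accidentally made finite by the thinning.
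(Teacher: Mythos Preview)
Your plan has the FSV input backwards, and this is a genuine gap. The criterion of \cite[Theorem~3.1]{fsv2024} does \emph{not} take only a ``hardness witness'' (a structure with no Borel solution). It requires two ingredients: (1) no Borel solution on the full shift graph $\overrightarrow{\mcal{G}}_S$, and (2) a \emph{uniform} Borel solution on all non-dominating sections $\Dom_x$, i.e., a single Borel map $\varphi:\Dom\to\N$ whose restriction to each $\Dom_x$ solves the problem there. You have only planned for (1), and in fact (1) is the easy half: the existence of a Borel $r$-forward-independent hitting set for $S$ (for any $r\ge 1$) immediately gives a finite Borel coloring of $\mcal{G}_S$, contradicting $\chi_B(\mcal{G}_S)=\infty$. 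No marker argument is needed, and there is no circularity with Theorem~\ref{thm:EquivalentCharFinAsDim}. The real work --- and what the paper actually does --- is the construction of the uniform solution $\varphi_r$ on $\Dom$ (via an explicit Borel set $\mcal{H}_r\subseteq([\N]^\N)^2$ and the technical Claim showing it is well-defined on $\Dom$). Your proposal contains no plan for this step; the remark that ``the underlying hard set is typically $\Dom$'' suggests you believe the FSV machinery handles this automatically, which it does not.

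Two smaller points. First, your worry about thinning $[\N]^\N$ to make $S$ finite-to-one is unnecessary: $S$ is already finite-to-one on $[\N]^\N$, since any $y$ has exactly $y(0)$ preimages. Second, the FSV output is a Borel set $B\subseteq\N^\N\times\N^\N\times[\N]^\N$, not a Borel function; the sections $B_\sigma$ need not be $S$-invariant, so you must do additional work (the paper appends infinite tails at points that exit $B$) to obtain an honest acyclic Borel function fibered over $\N^\N$. Your part (b) asserts that acyclicity and finite-to-one-ness are ``inherited,'' but without addressing the non-invariance of $B_\sigma$ this is not yet an argument.
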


For each $r \in \N^+$, let $\mcal{D}_r$ be the digraph on $\N$ such that there is a directed edge from $k$ to $\ell$ if and only if $k > 0$ and $\ell = k - 1$ or $k = 0$ and $\ell \geq r$ (see Figure~\ref{fig:hitting_graph}).

    \begin{figure}
        \centering
        \includegraphics[width = 0.7\textwidth]{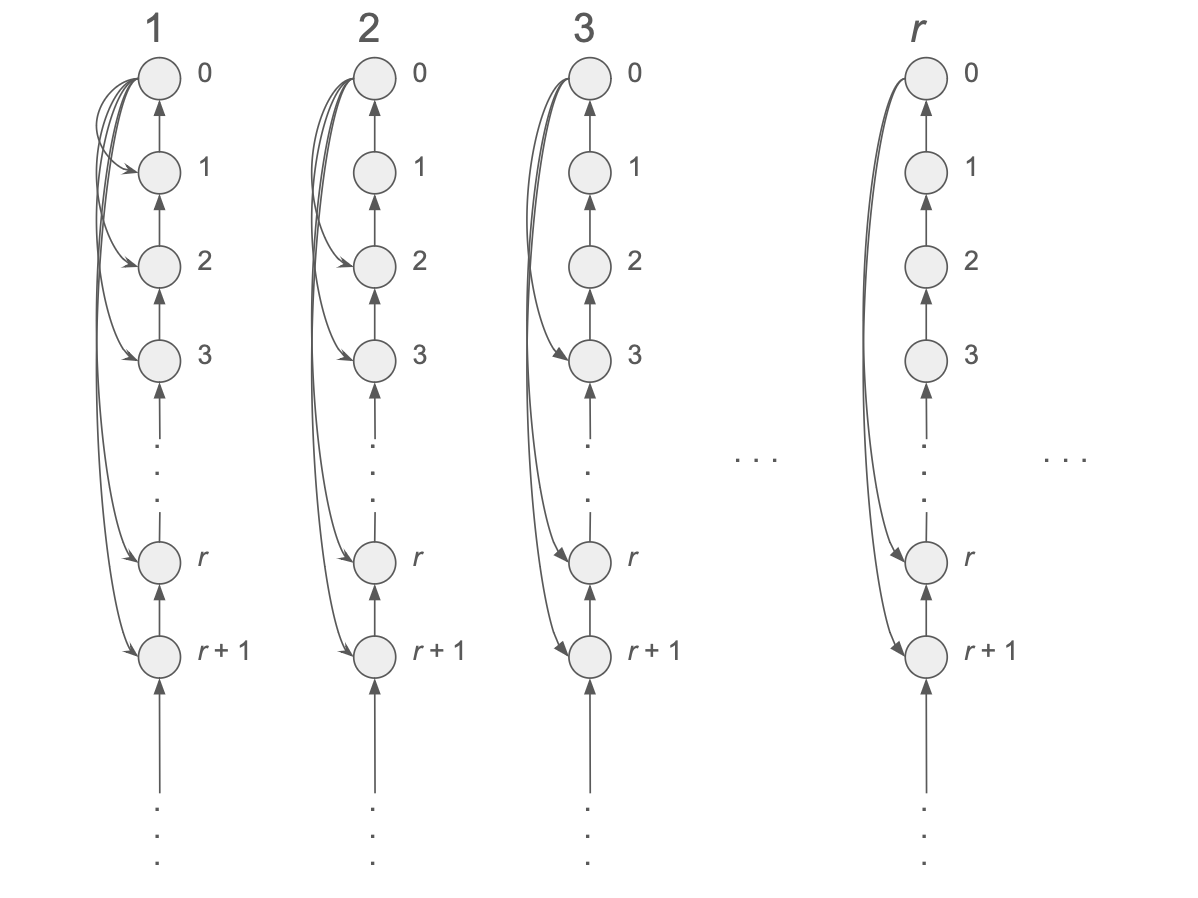}
        \caption{The structure $\mcal{D}_r$ on $\N$ for various values of $r \in \N^+$}
        \label{fig:hitting_graph}
    \end{figure}

    \begin{lem}
    \label{lem:hitting_to_L_0}
        Let $f:X\to X$ be an acyclic Borel function, and let $r\in \mathbb{N}^+$.
        Then the following are equivalent:
        \begin{enumerate}
            \item [(i)] There is a Borel $r$-forward-independent hitting set for $f$.
            \item [(ii)] There is a Borel map $\varphi:X\to \mathbb{N}$ that is a homomorphism from $(X,\dig)$ to $(\mathbb{N},\mcal{D}_r)$.
        \end{enumerate}
    \end{lem}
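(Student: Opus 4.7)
The plan is to view a homomorphism $\varphi: (X, \dig) \to (\N, \mcal{D}_r)$ as a Borel ``countdown timer'' until the next visit to a fixed hitting set, and conversely to extract such a timer from any hitting set. The structure of $\mcal{D}_r$ is the key: from any vertex $k > 0$ the unique forward step goes to $k-1$, so the $\varphi$-value must strictly decrease by one at each iteration of $f$ until it reaches $0$, and the edge rule at $0$ (which requires jumping to some $\ell \geq r$) enforces that no two consecutive zeros along an orbit can occur within $r$ steps of each other. This bijects naturally with $r$-forward-independent hitting sets via the assignment $H = \varphi^{-1}(0)$.

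For (ii)$\Rightarrow$(i), I set $H = \varphi^{-1}(0)$, which is Borel since $\varphi$ is. To see $H$ is hitting, start from any $x \in X$ and trace the orbit $x, f(x), f^2(x), \ldots$; the homomorphism property forces $\varphi(f^{k+1}(x)) = \varphi(f^k(x)) - 1$ whenever $\varphi(f^k(x)) > 0$, so after at most $\varphi(x) + 1$ steps (plus one initial jump if $\varphi(x) = 0$) the sequence reaches $0$, placing some $f^k(x)$ into $H$ with $k \geq 1$. To see $r$-forward-independence, suppose $x \in H$, so $\varphi(x) = 0$; then $\varphi(f(x)) \geq r$ by the edge rule at $0$, after which $\varphi$ decreases by one at each step, so the earliest return to $0$ occurs at step $\varphi(f(x)) + 1 \geq r + 1 > r$.

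For (i)$\Rightarrow$(ii), I define
\[
\varphi(x) = \min\{k \geq 0 : f^k(x) \in H\}.
\]
This is well-defined (if $x \in H$, take $k = 0$; otherwise the hitting property supplies some $k \geq 1$) and Borel, since each level set $\{\varphi = k\}$ is a Boolean combination of preimages of $H$ under iterates of $f$. The verification splits on whether $x$ lies in $H$: if $x \notin H$, the minimizing $k$ for $x$ equals one more than the minimizing $k'$ for $f(x)$, yielding $\varphi(f(x)) = \varphi(x) - 1$; if $x \in H$, so $\varphi(x) = 0$, then the least $k \geq 1$ with $f^k(x) \in H$ satisfies $k > r$ by $r$-forward-independence, which translates to $\varphi(f(x)) \geq r$. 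Both cases match the edges of $\mcal{D}_r$.

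I do not anticipate a real obstacle here; the lemma is essentially a reformulation, and the main subtlety is just the bookkeeping between the two boundary cases $x \in H$ and $x \notin H$ when defining $\varphi$, together with confirming that acyclicity of $f$ (which implies the iterates $f^k$ distinguish distinct orbit positions) plays no role beyond guaranteeing that the countdown description is consistent.
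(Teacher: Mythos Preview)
Your proof is correct and follows essentially the same approach as the paper: in both directions you use the map $\varphi(x) = \min\{k \ge 0 : f^k(x) \in H\}$ and the set $H = \varphi^{-1}(0)$, with the same case split on whether $\varphi(x) = 0$ or $\varphi(x) > 0$. Your treatment of the hitting property in (ii)$\Rightarrow$(i) is in fact slightly more careful than the paper's, since you explicitly address the case $\varphi(x) = 0$ to ensure the witnessing iterate $k$ lies in $\N^+$.
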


    \begin{proof}  
        (i) $\implies$ (ii). Let $H \subseteq X$ be a Borel $r$-forward-independent hitting set for $f$.
        Define a Borel map $\varphi : X \rightarrow \N$ by letting $\varphi(x)$ be the least $k \in \N$ such that $f^k(x) \in H$.
        Note that $\varphi(x) = 0$ if and only if $x \in H$.
        To see that $\varphi$ is a homomorphism, let $x\in X$ and observe that there is a directed edge from $\varphi(x)$ to $\varphi(f(x))$;
        indeed, if $\varphi(x) > 0$, then $\varphi(f(x)) = \varphi(x) - 1$, and if $\varphi(x) = 0$, then $x \in H$, which implies that $\varphi(f(x))\ge r$ since $H$ is $r$-forward-independent. 

        (ii) $\implies$ (i). Let $\varphi : \overrightarrow{\mcal{G}}_f \rightarrow \mcal{D}_r$ be a Borel homomorphism. Set $H = \varphi^{-1}(\{0 \})$.
        Then clearly $H$ is Borel.
        To see that $H$ is $r$-forward-independent, suppose $x, y \in H$ with $f^k(x) = y$ for some $k > 0$.
        Then $\varphi(x) = 0$, so $\varphi(f^i(x)) > 0$ whenever $1 \leq i \leq r$. Since $\varphi(f^k(x)) = 0$, we have $k > r$.
        To see that $H$ is hitting for $f$, let $x \in X$ and observe that $f^{\varphi(x)}(x)\in H$ as $\varphi$ is a homomorphism.
    \end{proof}

Now we are ready to prove Theorem~\ref{thm:ComplexityFIH}.
The proof strategy is as follows: We show first that, while there are no Borel forward-independent hitting sets for $S$ on the entirety of $[\N]^{\N}$, such sets can be constructed for $S$ on the \emph{non-dominating} subsets of $[\N]^{\N}$, i.e., the sets of the form $\Dom_x$ where $x \in [\N]^{\N}$; furthermore, this construction can be done in a uniform Borel way.
This then allows us to apply \cite[Theorem~3.1]{fsv2024}, which gives the $\mbf{\Sigma}^1_2$-completeness of the sets (3.2) and (3.3).

\begin{proof}[Proof of Theorem~\ref{thm:ComplexityFIH}]

We first fix $r \in \N^+$ and prove that there are a standard Borel space $X$ and a Borel function $f:\mathbb{N}^\mathbb{N}\times X\to \mathbb{N}^\mathbb{N}\times X$ as in the statement of Theorem~\ref{thm:ComplexityFIH} such that the set~\eqref{eq:OneComplete} is $\mbf{\Sigma}^1_2$-complete. At the end of the proof, we explain how to re-select $X$ and $f$ so that the set~\eqref{eq:AllComplete} is $\mbf{\Sigma}^1_2$-complete.

Recall that $\digS$ is the Borel digraph on $[\N]^{\N}$ induced by the shift function $S$ on $[\N]^{\N}$.
Note that there is \emph{no} Borel map $\varphi:[\N]^\N\to \mathbb{N}$ that is a homomorphism from $([\N]^\N,\digS)$ to $(\N,\mcal{D}_r)$.
This follows, for example, from \cite[Example~3.2]{kst1999}, since the existence of a Borel $r$-forward-independent hitting set for $S$ would imply that the Borel chromatic number of $\mcal{G}_S$ is finite.

In order to apply Theorem~\ref{thm:FSVversionTwo}, we must show that there is a Borel map $\varphi_r : \Dom \to \N$ such that, for each $x \in [\N]^{\N}$, the restriction $\varphi_r \rest \Dom_x$ is a homomorphism from $\overrightarrow{\mcal{G}}_S \rest \Dom_x$ to $\mcal{D}_r$, where $\digS \rest \Dom_x$ denotes the sub-digraph of $\digS$ induced by $\Dom_x$.

Set $x(-r) = 0$ for all $x \in [\N]^{\N}$. Define $\mcal{H}_r \subseteq ([\N]^{\N})^2$ by $(x, y) \in \mcal{H}_r$ if and only if $\vert [x(2rn - r), x(2rn + r)) \cap y \vert \equiv_{2r} 0$, where $n \in \N$ is minimal such that $[x(2rn - r), x(2rn + r)) \cap y \neq \emptyset$.
Observe that $\mcal{H}_r$ is Borel.

    \begin{claim}\label{cl:WellDefined}
        If $(x, y) \in \Dom$, then there is $n \in \N$ such that $\vert [x(2rn - r), x(2rn + r)) \cap y \vert \geq 2r$. 
    \end{claim}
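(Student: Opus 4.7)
The plan is to argue by contraposition: I will show that if $|[x(2rn-r), x(2rn+r)) \cap y| \leq 2r-1$ for every $n \in \N$, then $(x,y) \notin \Dom$, i.e., $y(m) > x(m)$ for all sufficiently large $m$.

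First I would observe that, with the convention $x(-r) = 0$, the intervals $I_n := [x((2n-1)r), x((2n+1)r))$ for $n \geq 0$ partition $\N$. The contrapositive hypothesis then says that each $I_n$ meets the set $y$ in at most $2r-1$ points. The key is a pigeonhole estimate: for each $m \in \N$, there is a unique $n_m$ with $y(m) \in I_{n_m}$, and the $m+1$ values $y(0), y(1), \dots, y(m)$ must be distributed among $I_0, I_1, \dots, I_{n_m}$, each of which contains at most $2r-1$ of them. Hence
\[
m+1 \leq (n_m + 1)(2r-1), \qquad \text{so} \qquad n_m \geq \frac{m+1}{2r-1} - 1.
\]

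Since $y(m) \in I_{n_m}$, we get $y(m) \geq x((2n_m - 1)r)$. Substituting the lower bound on $n_m$ and simplifying gives
\[
(2n_m - 1)r \geq \frac{2r(m+1)}{2r-1} - 3r,
\]
and a short calculation shows that this quantity exceeds $m$ as soon as $m > 6r^2 - 5r$. Because $x$ is strictly increasing on $\N$ and integer-valued, this yields $y(m) \geq x((2n_m-1)r) > x(m)$ for all sufficiently large $m$, directly contradicting the defining property of $\Dom$ that $y(k) \leq x(k)$ for infinitely many $k$.

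I do not foresee a substantive obstacle here; the argument is a clean double-counting/pigeonhole estimate against the intervals $I_n$. The only thing to be careful about is the boundary convention $x(-r) = 0$ (so that $I_0 = [0, x(r))$ really is the appropriate initial block) and the fact that the bound $(2n_m - 1)r > m$ is strict, which is what converts the weak inequality $y(m) \geq x((2n_m-1)r)$ into the strict inequality $y(m) > x(m)$ needed to contradict $y \leq^\infty x$.
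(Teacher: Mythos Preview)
Your proof is correct and follows essentially the same approach as the paper's: both argue by contradiction, partition $\N$ into the intervals $I_n=[x((2n-1)r),x((2n+1)r))$, use the hypothesis that each $I_n$ contains at most $2r-1$ elements of $y$, and derive that $y(m)>x(m)$ for all large $m$. The only cosmetic difference is bookkeeping: the paper tracks $\ell_n=\min\{m:y(m)\ge x((2n-1)r)\}$ and bounds it above, whereas you track the essentially inverse quantity $n_m$ and bound it below via an explicit pigeonhole count; the resulting inequalities are equivalent.
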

    \begin{proof}
        Let $(x,y)\in \Dom$ and assume for contradiction that there is no $n\in \N$ such that $\vert[x(2rn - r), x(2rn + r)) \cap y \vert\ge 2r$.
        For each $n \in \N$, let $\ell_n\in \N$ be minimal such that $y(\ell_n)\ge x(2rn-r)$; it follows from the assumption that $\ell_{n+1}<\ell_n+2r$.
        Hence, $\ell_n \leq 2rn - n$ for each $n \in \N$. So, there is $n_0\in \N$ such that $\ell_{n}< 2rn-3r$ for every $n\ge n_0$.
        Let $k\ge \ell_{n_0}$, and let $n\ge n_0$ be minimal such that $k\ge \ell_n$.
        Then we have
        $$y(k)\ge y(\ell_n)\ge x(2rn-r)> x(k)$$
        as $2rn-r> \ell_n+2r > \ell_{n+1} \ge k$, which is a contradiction as $y\le^\infty x$.
    \end{proof}

Now for each $(x, y) \in \Dom$, define $\varphi_r(x, y)$ to be the least $k \in \N$ such that $S^k(y) \in (\mcal{H}_r)_x$.
By Claim~\ref{cl:WellDefined}, we have that $\varphi_r$ is well-defined and it is easy to check that it is Borel as $\mathcal{H}_r$ is Borel.
To see that $\varphi_r \rest \Dom_x$ is a homomorphism from $\overrightarrow{\mcal{G}}_S \rest \Dom_x$ to $\mcal{D}_r$ for each $x \in [\N]^{\N}$, note that, if $y, y' \in \Dom_x$ and $S(y) = y'$, then $\varphi_r(x, y) = 0$ implies $\varphi_r(x, S(y)) \geq r$ since
$$\vert [x(2rn - r), x(2rn + r)) \cap S(y) \vert \equiv_{2r} 2r-1,$$
where $n \in \N$ is minimal such that $[x(2rn - r), x(2rn + r)) \cap y \neq \emptyset$.
If $\varphi_r(x, y) > 0$, then $\varphi_r(x, S(y)) = \varphi_r(x, y) - 1$ by the definition.

Now we are ready to finish the first part of the proof.
Note that the assumptions of Theorem~\ref{thm:FSVversionTwo} are satisfied by the construction of $\varphi_r$ above.
Hence, there is a Borel set $B\subseteq \N^\N\times \N^\N\times [\N]^\N$ such that 
    \begin{equation}\label{eq:TheirComplex}
        \{\sigma\in \N^\N: \text{ there is a Borel homomorphism from } (\overrightarrow{\mcal{G}}'_S \rest B_{\sigma}) \text{ to } \mcal{D}_r \}
    \end{equation}
is $\mbf{\Sigma}^1_2$-complete, where $\overrightarrow{\mcal{G}}'_S$ is the digraph on $\N^{\N} \times [\N]^{\N}$ such that there is a directed edge from $(\gamma, x)$ to $(\delta, y)$ if and only if $\gamma = \delta$ and $S(x) = y$.
Let
$$Y=\{(\sigma,\gamma,x)\in B:(\sigma,\gamma,S(x))\not\in B\}$$
and observe that $Y$ is a Borel set.
Set also $Z=(\N^\N\times \N^\N\times [\N]^\N)\setminus B$.
We define
$$X=\bigsqcup_{\ell\in \mathbb{N}} (\N^\N\times [\N]^\N\times \{\ell\})$$
and a function $f$ on $\mathbb{N}^\N\times X$ as follows.
If $(\sigma,(\gamma,x,\ell))\in \N^{\N} \times X$ has the property that $(\sigma,\gamma,x)\not\in Y\cup Z$, then we set $f(\sigma,(\gamma,x,\ell))=(\sigma,(\gamma,S(x),\ell))$.
Otherwise, we have that $(\sigma,\gamma,x)\in Y\cup Z$ and we let $f(\sigma,(\gamma,x,\ell))=(\sigma,(\gamma,x,\ell+1))$.
Then $f: \N^{\N} \times X\to \N^{\N} \times X$ is a finite-to-one acyclic Borel function. 
Fix $\sigma \in \N^{\N}$, and let $f_{\sigma}$ be the function on $X$ such that $f(\sigma, (\gamma, x, \ell)) = (\sigma, f_{\sigma}(\gamma, x, \ell))$ for all $(\gamma, x, \ell) \in X$.
Note that the collection $\{f_\sigma\}_{\sigma\in \mathbb{N}^\mathbb{N}}$ is well-defined.

To complete this part of the proof, we show that the sets \eqref{eq:TheirComplex} and \eqref{eq:OneComplete} coincide.
Define a set $T \subseteq X$ by $(\gamma, x, \ell) \in T$ if $\ell = 0$ and $(\sigma, \gamma, x) \in Y \cup Z$, and let $\mcal{C}$ denote the collection of connected components of the graph $\mcal{G}_{f_{\sigma}}$ on $X$ generated by $f_{\sigma}$ containing a point $(\gamma, x, \ell) \in X$ such that $(\sigma, \gamma, x) \in Y \cup Z$. Then $T$ is a Borel transversal for the connectedness relation of $\mcal{G}_{f_{\sigma}} \rest \bigcup \mcal{C}$. Therefore, since $f_{\sigma}$ is acyclic, we have that $f_{\sigma} \rest \bigcup \mcal{C}$ admits a Borel $r$-forward-independent hitting set by Remark~\ref{rem:Bounded}. It follows that the sets \eqref{eq:TheirComplex} and \eqref{eq:OneComplete} coincide, as required.

We now explain how to modify the proof to obtain $X$ and $f$ so that \eqref{eq:AllComplete} is $\mbf{\Sigma}^1_2$-complete: Consider a language consisting of countably many binary relation symbols $(\mcal{R}_r)_{r \in \N^+}$. For each $r \in \N^+$, interpret $\mcal{R}_r$ in $\N$ as the edge relation of $\mcal{D}_r$, and apply Theorem~\ref{thm:FSVversionTwo} to obtain a Borel set $B \subseteq \N^{\N} \times \N^{\N} \times [\N]^{\N}$ such that
\begin{equation}
        \{\sigma\in \N^\N: \forall r\in \N^+ \text{ there is a Borel homomorphism from } (\overrightarrow{\mcal{G}}'_S \rest B_{\sigma}) \text{ to } \mcal{D}_r \}
    \end{equation}
is $\mbf{\Sigma}^1_2$-complete. The rest of the proof proceeds in the same way.
\end{proof}

\begin{remark}
\label{rem:Alex}
    In personal communication, Alex Kastner pointed out that the Borel set $B \subseteq \N^{\N} \times \N^{\N} \times [\N]^{\N}$ obtained from the application of Theorem~\ref{thm:FSVversionTwo} can be chosen so that, for each $\sigma, \gamma \in \N^{\N}$, the section $B_{\sigma, \gamma}$ is $\mcal{G}_S$-invariant. This allows one to define $X = \N^{\N} \times [\N]^{\N}$ and $f(\sigma, \gamma, x) = (\sigma, \gamma, S(x))$ directly in the proof of Theorem~\ref{thm:ComplexityFIH}.
\end{remark}

\section{The classification of $\operatorname{CSP}^{\operatorname{function}}_B(H)$}
\label{sec:DigraphHom}

This section is devoted to a proof of Theorem~\ref{thm:Complexity}.
As in the proof of of Theorem~\ref{thm:ComplexityFIH}, we utilize \cite[Theorem~3.1]{fsv2024}.

Recall that the set of codes for Borel functions on a fixed standard Borel space $X$ is $\mbf{\Pi}^1_1$ \cite[Lemma~A.3]{hkm2024}.
For the remainder of the section, we fix a finite sinkless digraph $H$.

\begin{proof}[Proof of Theorem~\ref{thm:Complexity}]
(i). Let $X$ be standard Borel, and let $f$ be a Borel function on $X$. If $(v,v)$ is a loop in $H$, then the function $\psi : X \to V(H)$ defined by $\psi(x) = v$ for all $x \in X$ is clearly a Borel digraph homomorphism.

Conversely, if $\operatorname{CSP}^{\operatorname{function}}_B(H)$ contains all Borel digraphs, then in particular it contains the shift digraph $\overrightarrow{\mathcal{G}}_S$.
Let $\psi:[\N]^\N\to V(H)$ be a Borel homomorphism from $\overrightarrow{\mathcal{G}}_S$ to $H$.
By the Galvin--Prikry theorem, since $V(H)$ is finite, there are $v\in V(H)$ and $y\in [\N]^\N$ such that $\psi(S^k(y))=v$ for every $k\in \N$.
Consequently, $(v,v)$ is a loop in $H$, and this completes the proof.

\medskip

(ii). Assume that $H$ is an ergodic digraph with no loops.
Then by the previous paragraph, we have $\overrightarrow{\mathcal{G}}_S\not\in \operatorname{CSP}^{\operatorname{function}}_B(H)$.
The next result, Proposition~\ref{pr:FlexibleLCL}, guarantees that \cite[Theorem~3.1]{fsv2024} can be used as in the proof of Theorem~\ref{thm:ComplexityFIH} to obtain an acyclic Borel function $f=(f_\sigma)_{\sigma\in{\N^\N}}:\N^\N\times X\to \N^\N\times X$ such that 
\begin{equation}\label{eq:CompleteErgodic}
    \{\sigma\in \mathbb{N}^\mathbb{N}:  \overrightarrow{\mathcal{G}}_{f_\sigma} \ \text{\normalfont admits a Borel homomorphism to } H\}
\end{equation}
is $\mbf{\Sigma}^1_2$-complete, thereby giving (ii).

    \begin{prop}\label{pr:FlexibleLCL}
        Let $H$ be a finite sinkless digraph that is ergodic and has no loops.
        Then there is $\ell_0\in \N$ such that, for every acyclic Borel function $f$ on a standard Borel space $X$ that admits a Borel $\ell_0$-forward-independent hitting set $A$, there is a Borel digraph homomorphism from $\dig$ to $H$.    
    \end{prop}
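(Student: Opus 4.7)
The plan is to build a Borel homomorphism $\psi:\dig\to H$ that sends the entire set $A$ to a single vertex $v\in V(H)$ furnished by the definition of ergodicity. Let $k_0$ be the associated integer and let $H'$ denote the strong connectivity component of $v$ in $H$: since $H'$ contains closed walks through $v$ of every length $\ge k_0$, it is strongly connected, aperiodic, and in particular sourceless as a subgraph. A standard fact about aperiodic strongly connected finite digraphs then supplies an integer $N_0\in\N$ such that for every $\ell\ge N_0$ and every $u,w\in V(H')$ there is a walk of length $\ell$ from $u$ to $w$ in $H'$. I would take $\ell_0:=N_0$ (or any larger integer); writing $\sigma(a):=\min\{k\ge 1:f^k(a)\in A\}$ for the return time of $a$ to $A$, the $\ell_0$-forward-independence hypothesis then forces $\sigma(a)\ge N_0+1$ for every $a\in A$.

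Given such a Borel $A$, define $\tau(x):=\min\{k\ge 0:f^k(x)\in A\}$ and the Borel partition $\{T_a\}_{a\in A}$ of $X$ by $T_a:=\{x:f^{\tau(x)}(x)=a\}$. Each $T_a$ is a rooted tree with root $a$ under the parent map $f$, and the ``marked'' vertices $M:=f(A)$ sit at depth $\sigma(a^-)-1\ge N_0$ in their respective trees, where $a^-\in A$ is the immediate predecessor. Setting $\psi(a)=v$ for every $a\in A$, it suffices to extend $\psi$ Borelly to $X\setminus A$ so that (i) $(\psi(y),\psi(f(y)))\in E(H')$ for every $y\in X\setminus A$, and (ii) $\psi(m)\in N^+(v)$ for every $m\in M$. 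Condition (i) then handles $f$-edges internal to any single $T_a$, while each $f$-edge of the form $(a^-,m)$ with $a^-\in A$ and $m=f(a^-)\in M$ becomes $(v,\psi(m))$, an edge of $H'$ by (ii); so $\psi$ is a digraph homomorphism into $H$.

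The main technical work is producing the extension, which I plan to do by induction on $\tau(y)$: at each step pick $\psi(y)\in N^-(\psi(f(y)))$, additionally requiring $\psi(y)\in N^+(v)$ when $y\in M$, then apply Luzin--Novikov uniformization at each level to assemble the finitely-valued choices Borelly. Non-emptiness of the choice set at unmarked nodes is immediate from sourcelessness of $H'$. At a marked node $m$ of depth $k\ge N_0$, non-emptiness of $N^-(\psi(f(m)))\cap N^+(v)$ translates into the existence of a closed walk of length $k+1$ through $v$ in $H'$ whose tail agrees with the already-chosen coloring of the chain from $m$ down to $a$; such walks exist by ergodicity because $k+1\ge k_0$, so the main issue is forcing earlier inductive choices to be compatible with these walks. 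The principal obstacle -- and the point where I expect the proof to require real care -- is maintaining consistency when distinct marked-node chains share vertices in some $T_a$ (which can happen since $f$ need not be injective); my plan is to exploit the aperiodicity of $H'$ together with the slack $\ell_0\ge N_0$, which makes walks of every required length between every pair of $V(H')$-vertices available, to preselect colors on the common ancestors of marked nodes so that every demanded walk-extension is realizable. Verifying that this preselection can be carried out Borelly, and checking the case-analysis for merging chains, is the main content of the proof.
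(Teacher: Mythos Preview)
Your outline has a real gap at exactly the point you flag. Sending all of $A$ to $v$ and building $\psi$ outward by induction on $\tau$ commits you, at a marked node $m$ of depth $k$, to finding $\psi(m)\in N^{-}(\psi(f(m)))\cap N^{+}(v)$. Whether this set is nonempty depends on the value $\psi(f(m))$ chosen one level earlier, and there is no reason every vertex of $H'$ is reachable from $v$ in exactly two steps. You propose to ``preselect colors on the common ancestors of marked nodes,'' but you do not say how; and the difficulty is genuine: a branching vertex $y$ at depth $j$ may have marked descendants at many different depths $k_1,k_2,\dots$, each forcing $\psi(y)$ to be reachable from $v$ in exactly $k_i-j+1$ steps, while $\psi(y)$ must also lie in $N^{-}(\psi(f(y)))$. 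When some $k_i-j+1<N_0$ these constraints are nontrivial, and you give no argument that they are simultaneously satisfiable, nor that the required bookkeeping can be done Borelly (note that the sets $f^{i}(A)$ you would implicitly need are only analytic when $f$ is not countable-to-one). Minor point: Lusin--Novikov is unnecessary here, since $V(H)$ is finite---just fix a linear order on $V(H)$ and always take the least admissible color.

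The paper's argument sidesteps the whole issue by \emph{not} insisting that $\psi$ send $A$ to a single vertex. Instead it introduces a buffer zone $Z=\bigsqcup_{z\in A}A_z$ with $A_z=\bigcup_{j=1}^{\ell_0}f^{-j}(z)$ (Borel, since it is a finite union of preimages), and colors in two pieces. On $X\setminus Z$, the color of $x$ depends only on the distance $k(x)$ to $Z$, read modulo the length of a fixed closed walk $C$ through $v_0$; because this depends only on a single integer, no consistency problem arises at branching points. Each $z\in A$ lies in $X\setminus Z$, so $\psi(z)$ is already determined; then on $A_z$ one simply reads off a fixed directed path $D_z$ of length $\ell_0$ from $v_0$ to $\psi(z)$, which exists by the choice of $\ell_0$. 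The buffer of depth exactly $\ell_0$ is what absorbs the variability you were trying to handle by ``preselection''; it is the key idea missing from your plan.
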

    \begin{proof}
        Let $v_0\in V(H)$ and $k_0\in \N$ be such that for every $k\ge k_0$, there is a directed path from $v_0$ to $v_0$ of length exactly $k$.
        We may, without loss of generality, and after possibly passing to the strong connectivity component of $v_0$ in $H$, assume that $H$ is strongly connected.
        We claim that there is $\ell_0\in \N$ such that, for every $\ell\ge \ell_0$ and any $w \in V(H)$, there is a directed path of length $\ell$ from $v_0$ to $w$.
        Indeed, by strong connectivity of $H$, for any $w\in V(H)$ there is some path from $v_0$ to $w$, and this path can be pre-composed with a path from $v_0$ to $v_0$ of any length $k\ge k_0$.

        Assign to each $z\in A$ a set $A_z=\bigcup_{j=1}^{\ell_0} f^{-j}(z)$.
        Note that, if $z, z' \in A$ and $z \neq z'$, then $A_z \cap A_{z'} = \emptyset$; indeed, if $y\in A_z\cap A_{z'}$, then there are $1\le j\le j'\le \ell_0$ such that $f^j(y),f^{j'}(y)\in A$.
        In particular, $f^{j'-j}(f^j(y)),f^j(y)\in A$ which can only happen if $j=j'$ as $A$ is $\ell_0$-forward-independent, implying that $z=z'$. Now define $Z = \bigsqcup_{z \in A} A_z$ and note that $Z$ is Borel and disjoint from $A$.
        
        For every $x\in X\setminus Z$, define $k(x)\in \N^+$ to be minimal such that $f^{k(x)}(x)\in Z$.
        Note that $k(x)$ is well-defined as $A$ is hitting for $f$.
        Fix a directed path $C$ from $v_0$ to $v_0$; for each $v \in C$, define $m(v)$ to be the number of steps in $C$ from $v$ to $v_0$.
        Then, for each $x \in X \setminus Z$, define $\psi(x)$ to be the least $v \in C$ such that $m(v)\equiv_{|C|} k(x)$.
        Now we define $\psi$ on $Z$; first, for each $z\in A$, fix a directed path $D_z$ from $v_0$ to $\psi(z)$ of length $\ell_0$.
        For each $y \in Z$, there is a unique $z \in A$ such that $y \in A_z$. Define $\psi(y)$ to be the unique $w \in D_z$ such that, if $f^j(y)=z$, then $j$ is the number of steps in $D_z$ from $w$ to $\psi(z)$.
        Then $\psi$ is as required.
    \end{proof}

    (iii).
    We start with the following observation.

    \begin{claim}\label{cl:Observe}
        Let $f:X\to X$ be a Borel function (not necessarily acyclic).
        Then there is a Borel homomorphism $\psi$ from $\dig$ to $H$ if and only if there is a Borel homomorphism $\psi'$ from $\dig$ to a union of strong connectivity components of $H$.
    \end{claim}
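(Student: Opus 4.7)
The plan is as follows. The $(\Leftarrow)$ direction is immediate: a disjoint union of strong connectivity components of $H$---meaning the digraph with vertex set $V(H)$ but retaining only the edges internal to each SCC---is a subdigraph of $H$, so any Borel homomorphism into such a disjoint union is automatically a Borel homomorphism into $H$. The substance is the forward direction: given a Borel homomorphism $\psi \colon X \to V(H)$, I will construct $\psi'$ so that every $f$-edge maps to an edge lying inside a single SCC of $H$.

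The key tool will be the \emph{eventual SCC} of each forward walk. Since any edge of $H$ either stays within a single SCC or passes to a strictly later SCC in the condensation DAG, the sequence of SCCs visited by $\psi(x), \psi(f(x)), \psi(f^2(x)), \dots$ is weakly monotone in the DAG order and, as $H$ has finitely many SCCs, stabilizes at a single SCC $S_\infty(x)$. The resulting map $S_\infty$ is Borel (each preimage is $\mbf{\Sigma}^0_2$) and $f$-invariant. Moreover, because the walk from $x$ remains in $S_\infty(x)$ forever, $S_\infty(x)$ must contain at least one edge: either $|S_\infty(x)| = 1$ with a loop, or $|S_\infty(x)| \geq 2$ and is strongly connected, in which case every vertex of $S_\infty(x)$ admits an in-neighbor inside $S_\infty(x)$.

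For each SCC $C$ in the range of $S_\infty$, I would set $X_C := S_\infty^{-1}(C)$ and partition $X_C = A_C \sqcup B_C$, where $A_C := X_C \cap \psi^{-1}(C)$. Both $X_C$ and $A_C$ are Borel and forward-$f$-invariant---the latter because once the walk enters $C$ it cannot escape. On $A_C$, I set $\psi' := \psi$, which already lands in $C$. On $B_C$, I let $n_C(x) := \min\{n \geq 1 : f^n(x) \in A_C\}$, which is finite by the definition of $S_\infty$ and satisfies $n_C(f(x)) = n_C(x) - 1$. I then define $\psi' \rest B_C$ by induction on $n_C$: with $\psi'(f(x)) \in C$ already defined at the previous level, set $\psi'(x)$ to be the least (under any fixed ordering of $V(H)$) in-neighbor of $\psi'(f(x))$ inside $C$, which exists by the preceding paragraph. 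Gluing the pieces across the forward-$f$-invariant partition $X = \bigsqcup_C X_C$ yields the desired Borel homomorphism from $\dig$ to the disjoint union $\bigsqcup_C C$.

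The main subtlety will be the well-foundedness of $f \rest B_C$ that is needed for the backward induction: no forward $f$-orbit can remain in $B_C$ forever, since otherwise the walk from some $x \in B_C$ would never enter $A_C$, contradicting $S_\infty(x) = C$. Everything else is routine bookkeeping of Borel definability.
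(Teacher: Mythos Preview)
Your proof is correct and follows essentially the same approach as the paper: both identify the eventual strong component $S_\infty(x)$ (the paper writes $X_A = \bigcup_{k} \bigcap_{j \geq k} f^{-j}(\psi^{-1}(A))$), keep $\psi$ unchanged once the orbit has entered that component, and backfill along in-neighbors inside the component for the finitely many earlier steps. The only cosmetic difference is that the paper precomputes a fixed backward walk $(w(k,v))_{k}$ from each $v \in A$ rather than phrasing the backfill as an induction on $n_C$.
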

    \begin{proof}
    For every strong connectivity component $A\subseteq V(H)$ of $H$, define
    $$X_A=\bigcup_{k\in \N} \bigcap_{j=k}^\infty f^{-j}(\psi^{-1}(A)).$$
    Observe that $X=\bigsqcup_A X_A$, where $A$ ranges over all strong connectivity components of $H$, is a decomposition of $X$ into  Borel sets that are $f$-invariant.

    Fix a strong connectivity component $A$ and assign to each $v\in A$ a sequence $(w(k,v))_{k\in \N}\subseteq A$ such that $w(0,v)=v$ and $(w(k+1,v),w(k,v))\in E(H)$ for every $k\in \N$.    
    Define 
    $$\psi_A(x)=w(k,v)\in A$$
    for every $x\in X_A$, where $k\in \N$ is minimal such that $f^k(x)\in \psi^{-1}(A)$ and $\psi(f^k(x))=v$.
    Then $\psi'=\bigsqcup_A \psi_A$ is the desired homomorphism.    
    \end{proof}

    By Claim~\ref{cl:Observe}, we may, without loss of generality, assume that $H$ is a disjoint union of strongly connected digraphs.
    In particular, $H$ is both sinkless and sourceless.
    By the assumption in (iii), none of the strong connectivity components of $H$ is ergodic; then by \cite[Corollary~6.3]{thornton2022}, $\operatorname{CSP}_B(H)$ is $\mbf{\Pi}^1_1$.
    Consequently, $\operatorname{CSP}^{\operatorname{function}}_B(H)$ is $\mbf{\Pi}^1_1$ as well.
\end{proof}

\section{Finite asymptotic dimension for graphs generated by a Borel function}\label{sec:FinAsDim}

In this section, we prove Theorems~\ref{thm:EquivalentCharFinAsDim} and \ref{thm:main}.
We begin by recalling the definition of \emph{Borel asymptotic dimension} for a Borel graph \cite[Definition~3.2]{cjmst2023}. In \cite{cjmst2023}, two separate definitions -- one in terms of coverings and the other in terms of equivalence relations -- are provided; these are shown to be equivalent for locally countable Borel graphs \cite[Lemma~3.1]{cjmst2023}. We prove that, for graphs generated by a single Borel function $f$, the two definitions remain equivalent even when $f$ is \emph{uncountable}-to-one.

\medskip

Fix a standard Borel space $X$. Since we work exclusively with graphs of the form $\mcal{G}_f$, where $f$ is a Borel function on $X$, we denote by $\rho_f$ the graph distance metric of $\mcal{G}_f$.
Given $t \in \N^+$ and $x \in X$, we write $B_t(x)$ for the $\rho_f$-ball of radius $t$ around $x\in X$.
Given $U\subseteq X$ and $r\in \N^+$, we define $\mathcal{F}_r(U)$ to be the equivalence relation on $U$ generated by the set of pairs $(x,y)\in U^2$ such that $\rho_f(x,y)\le r$.
Given a Borel equivalence relation $E$ on $X$, we say $E$ is \emph{uniformly $\rho_f$-bounded} (or merely \emph{uniformly bounded} if $\rho_f$ is understood) if there is $M > 0$ such that each $E$-class has $\rho_f$-diameter at most $M$.

\begin{defn}
\label{def:Asdim}
    Let $f:X\to X$ be a Borel function.
\begin{itemize}
    \item [(i)] The \emph{Borel asymptotic dimension of $\mcal{G}_f$ from coverings}, denoted $\asdim_B^{\text{cov}}(\mcal{G}_f)$, is equal to $d \in \N$ if $d$ is minimal such that, for every $r\in \N^+$, there are Borel sets $U_0, \dots, U_d$ covering $X$ such that $\mcal{F}_r(U_i)$ is uniformly $\rho_f$-bounded for every $i \leq d$.
    \item [(ii)] The \emph{Borel asymptotic dimension of $\mcal{G}_f$ from equivalence relations}, denoted $\asdim_B^{\text{eq}}(\mcal{G}_f)$, is equal to $d \in \N$ if $d$ is minimal such that, for every $r \in \N^+$, there is a uniformly $\rho_f$-bounded Borel equivalence relation $E$ on $X$ such that, for each $x\in X$, $B_r(x)$ meets at most $(d + 1)$ $E$-classes. 
\end{itemize}
\end{defn}

In Subsection~\ref{subsec:AsdimEquiv}, we show $\asdim^{\text{cov}}_B(\mcal{G}_f) = \asdim^{\text{eq}}_B(\mcal{G}_f)$ for any Borel function $f$, so that we may define $\asdim_B(\mcal{G}_f)$ to be $\asdim^{\text{cov}}_B(\mcal{G}_f)$ (equivalently, $\asdim^{\text{eq}}_B(\mcal{G}_f)$).

In Subsections~\ref{subsec:AsdimFIH} and \ref{subsec:FIHAsdim}, we demonstrate, for graphs of the form $\mcal{G}_f$, a connection between both the equivalence relation and covering definitions of finite Borel asymptotic dimension and the presence of Borel forward-independent hitting sets. In Subsection~\ref{subsec:AsdimEquiv}, we use this hitting-set characterization of finite Borel asymptotic dimension to prove Theorem~\ref{thm:EquivalentCharFinAsDim}, and in Subsection~\ref{subsec:MainThm}, we prove Theorem~\ref{thm:main}.

\subsection{From finite asymptotic dimension to hitting sets}
\label{subsec:AsdimFIH}

\begin{prop}\label{pr:CovToFIH}
    Let $f:X\to X$ be an acyclic Borel function, $A\subseteq X$ be a Borel hitting set for $f$ and $r\in\N^+$ be such that every equivalence class of $\mcal{F}_r(A)$ has finite diameter.
    Then there is a Borel $r$-forward-independent hitting set $H$ for $f$.
\end{prop}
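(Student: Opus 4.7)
The plan is to construct $H$ as a very simple Borel subset of $A$, namely
\[
H \ :=\ \{\, y \in A : f^k(y) \notin A \text{ for every } 1 \le k \le r \,\} \ =\ A \setminus \bigcup_{k=1}^{r} f^{-k}(A).
\]
This is manifestly Borel, and $r$-forward-independence is immediate from the definition: if $y, z \in H$ and $z = f^k(y)$ with $k \in \N^+$, then $z \in A$, so the condition defining $H$ forces $k > r$. The substance of the proof lies in verifying that $H$ is hitting.

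My plan for the hitting property is a greedy chain construction. Fix $x \in X$, and use that $A$ is hitting to pick the least $t \in \N^+$ with $y_0 := f^t(x) \in A$. Iteratively, whenever $y_j \notin H$ there exists $k_j \in \{1,\dots,r\}$ with $f^{k_j}(y_j) \in A$, and I would set $y_{j+1} := f^{k_j}(y_j)$. Each consecutive pair $y_j, y_{j+1}$ is a pair of $A$-points at $\rho_f$-distance at most $r$, so by transitivity all the $y_j$ lie in a single $\mcal{F}_r(A)$-class $C$. If the chain ever reaches some $y_m \in H$, then $y_m = f^{t + k_0 + \cdots + k_{m-1}}(x)$ with positive exponent, exhibiting $x$ as hit by $H$.

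The crux is to show the chain must terminate, and this is where the acyclicity hypothesis on $f$ is essential. I would first prove the (quick) observation that acyclicity of $f$ makes $\mcal{G}_f$ an acyclic graph, i.e., a \emph{forest}: in any putative cycle of $\mcal{G}_f$, each vertex has at most one of its two cycle-neighbors as its $f$-image, and a direction-counting argument around the cycle forces its edges to be uniformly $f$-oriented, yielding a periodic $f$-orbit and contradicting acyclicity. Once $\mcal{G}_f$ is known to be a forest, the unique-path property gives $\rho_f(y_0, y_j) = k_0 + \cdots + k_{j-1} \ge j$, while $y_0, y_j \in C$ forces $\rho_f(y_0, y_j) \le D_C$ where $D_C$ is the finite diameter of $C$. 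Hence $j \le D_C$ and the chain must terminate.

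The main obstacle is precisely this termination step. Because $f$ is allowed to be uncountable-to-one, a class $C$ of $\mcal{F}_r(A)$ may be uncountable, so neither cardinality nor local finiteness of $\mcal{G}_f$ is available to bound the number of times the forward $f$-orbit of $y_0$ can return to $C$. It is the forest structure of $\mcal{G}_f$ coming from acyclicity that translates the purely geometric hypothesis ``$C$ has finite diameter'' into a concrete bound on the length of the greedy chain, and this is the only nontrivial input needed beyond book-keeping.
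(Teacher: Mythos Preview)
Your proposal is correct and follows essentially the same approach as the paper: the set $H$ is defined identically, and the hitting argument in the paper takes the \emph{maximal} $\ell$ with $f^{k+\ell}(x)$ still $\mcal{F}_r(A)$-equivalent to $f^k(x)$, which is just the terminal element of your greedy chain. The only difference is that you spell out explicitly why $\mcal{G}_f$ is a forest (and hence $\rho_f(y_0,f^m(y_0))=m$), whereas the paper leaves this implicit in the line ``such $\ell$ exists as the $\mcal{F}_r(A)$-equivalence class of $f^k(x)$ has finite diameter.''
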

\begin{proof}
    Let 
        \begin{align*}
            H = \{x \in A : f^j(x) \notin A \text{ for any $1 \leq j \leq r$} \}=A\setminus \bigcup_{j=1}^r f^{-j}(A).
        \end{align*}
    Observe that $H$ is a Borel $r$-forward-independent set.
    It remains to show that $H$ is hitting for $f$.
    As $A$ is hitting for $f$, we have that for every $x\in X$ there is $k\in \N$ such that $f^k(x)\in A$.
    Let $\ell\in \N$ be maximal so that $f^{k+\ell}(x)$ and $f^k(x)$ are $\mcal{F}_r(A)$-equivalent.
    Note that such $\ell\in\N$ exists as the $\mcal{F}_r(A)$-equivalence class of $f^k(x)$ has finite diameter.
    It follows from the definition of $\mcal{F}_r(A)$ that $f^{k+\ell}(x)\in H$, which finishes the proof.
\end{proof}

\begin{prop}\label{pr:EqRelToHit}
    Let $t,d\in \N^+$, $f:X\to X$ be an acyclic Borel function and $E$ be a Borel equivalence relation on $X$ such that each $E$-class has finite diameter and $B_{2t(d+1)}(x)$ meets at most $(d+1)$ $E$-classes for every $x\in X$.
    Then there is a Borel $t$-forward-independent hitting set for $f$.
\end{prop}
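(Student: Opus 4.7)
My plan is to verify that $H := g(X) = \{h^{d+1}(x) : x \in X\}$, where $h(x) := f^{\sigma(x)+1}(x)$ and $\sigma(x) := \max\{j \geq 0 : f^j(x) \in C_x\}$ (writing $C_x$ for the $E$-class of $x$), is itself a Borel $t$-forward-independent hitting set for $f$. Acyclicity of $f$ makes $\mcal{G}_f$ a forest, so $\rho_f(x, f^j(x)) = j$; combined with the finite-diameter hypothesis on $E$-classes this makes $\sigma, h, g$ well-defined Borel maps $X \to X$.

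The first key step is to establish that the iterates $h^0(x), h^1(x), \ldots$ lie in pairwise distinct $E$-classes: by maximality of $\sigma$ the forward $f$-orbit of $x$ cannot re-enter $C_x$ after time $\sigma(x)$, and this persists inductively to give pairwise distinctness along the entire $h$-orbit. Combined with the hypothesis that every ball $B_{2t(d+1)}(\cdot)$ meets at most $d+1$ classes, this forces $\rho_f(x, g(x)) > 2t(d+1)$ for every $x$. Consequently $H$ is Borel and hitting, since $g(x) \in H$ with $g(x) = f^k(x)$ for some $k > 2t(d+1) \geq 1$.

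For $t$-forward-independence I would argue by contradiction. Suppose $y_1, y_2 \in H$ with $y_2 = f^k(y_1)$ for some $1 \leq k \leq t$, and write $y_i = g(x_i)$. The plan is to exhibit at least $d+2$ pairwise distinct $E$-classes inside $B_{2t(d+1)}(y_1)$, contradicting the hypothesis. These classes come from the classes $C_{h^j(y_1)}$ along the finite forward $h$-orbit from $y_1$ leading to $y_2$ (which lie inside $B_t(y_1)$ and are pairwise distinct as $h$-iterates); the class $C_{h^d(x_1)}$ of the immediate $f$-predecessor of $y_1$ along the $x_1$-orbit (at $\rho_f$-distance $1$ from $y_1$, distinct from the forward $h$-iterate classes by the pairwise distinctness property applied to the extended orbit of $x_1$ through $y_1$); and an additional class witnessed by the $x_2$-orbit, namely either the $f$-predecessor of $y_1$ on the $x_2$-orbit (if the $f$-orbit of $x_2$ passes through $y_1$; Case I) or the non-$y_1$ $f$-predecessor of $y_2$ on the $x_2$-orbit (if the $x_2$-orbit first meets the $y_1$-orbit at $y_2$; Case II), which in both cases lies in $B_{2t(d+1)}(y_1)$.

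The main obstacle is the careful case analysis on the position of $x_2$ in $\mcal{G}_f$ relative to $y_1, y_2$, and verifying that all the identified classes are pairwise distinct. Distinctness leverages that the classes visited by the entire extended $h$-orbit of $x_1$ (which passes through $y_1$ and continues via $h$-iterates of $y_1$) are pairwise distinct, and analogously for $x_2$; the class count then strictly exceeds $d+1$, giving the contradiction.
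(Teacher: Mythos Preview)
There is a genuine gap: you assert that $H=g(X)$ is Borel, but $g(X)$ is only the image of a Borel set under a Borel map, hence in general merely analytic. Nothing in the hypotheses forces $f$ (or $h$, or $g$) to be countable-to-one, and in fact $H$ can fail to be Borel. Here is a sketch with $d=t=1$ (so the hypothesis is that $B_4(x)$ meets at most two $E$-classes). Fix a closed set $F\subseteq 2^\N\times\N^\N$ whose first-coordinate projection $B$ is analytic but not Borel. Let $X=(2^\N\times\N^\N)\sqcup(2^\N\times\N)$, with $f(\beta,\gamma)=(\beta,0)$ on the first part and $f(\beta,n)=(\beta,n+1)$ on the second. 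For each $\beta$ and some fixed $N>8$, let the $E$-classes be
\[
D^\beta=\bigl(\{\beta\}\times\N^\N\bigr)\cap F,\qquad
C_0^\beta=\bigl(\{\beta\}\times\N^\N\bigr)\setminus F\;\cup\;\{\beta\}\times[0,N),
\]
and $C_j^\beta=\{\beta\}\times[jN,(j+1)N)$ for $j\ge1$. One checks directly that $E$ is Borel, each class has finite diameter, and every $B_4(x)$ meets at most two classes. Now $\sigma(\beta,\gamma)=0$ exactly when $(\beta,\gamma)\in F$, so $h(\beta,\gamma)=(\beta,0)$ in that case; otherwise $h(\beta,\gamma)=(\beta,N)$. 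Hence
\[
g(X)=h^2(X)=\{(\beta,N):\beta\in B\}\cup\{(\beta,jN):j\ge2\},
\]
and intersecting with $2^\N\times\{N\}$ recovers a copy of $B$, so $g(X)$ is not Borel.

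The paper sidesteps this by never taking forward images. It first sets $A=\{x:\forall k\ge1,\ (x,f^k(x))\notin E\}$, which is a countable intersection of Borel sets (each $A_k$ is the injective projection of $\operatorname{graph}(f^k)\setminus E$, Borel by Luzin--Suslin), and then $H=A\setminus\bigcup_{j=1}^t f^{-j}(A)$. Everything is built from preimages and countable Boolean operations. Your $A$ is exactly the set $\{x:\sigma(x)=0\}$, and indeed $h(X)=f(A)$; the paper's move is to refine $A$ itself rather than push it forward. Separately, your forward-independence argument is also underspecified: the ``$h$-iterates of $y_1$ leading to $y_2$'' need not lie in $B_{2t(d+1)}(y_1)$, since a single $h$-step can be arbitrarily long; the paper's pigeonhole instead walks forward through $A$ in steps of length at most $t$, guaranteeing the relevant points stay inside the ball.
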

\begin{proof}
    Define
    \begin{align*}
        A = \{x \in X : (x, f^k(x)) \notin E \text{ for all $k \in \N^+$} \}.
    \end{align*}
    Then $A$ is a Borel hitting set for $f$.
    Indeed, we have that $A = \bigcap_{k \in \N^+} A_k$, where
    $$A_k = \text{proj}_0(\text{graph}(f^k) \setminus E)$$
    is Borel for each $k \in \N^+$ by the Lusin--Suslin theorem.
    Now let $x \in X$.
    Then since $[x]_E$ has finite diameter, there is $k \in \N$ such that $(x, f^k(x)) \in E$ and $(x, f^{k + j}(x)) \notin E$ for every $j \in \N^+$.
    Thus $f^k(x) \in A$.

    Now set
    $$H = \{x \in A : f^j(x) \notin A \text{ for all $1 \leq j \leq t$} \}.$$
    Then $H = A \setminus (\bigcup_{1 \leq j \leq t} f^{-j}(A))$ is Borel and $t$-forward-independent.
    We claim that $H$ is hitting.
    Assume for contradiction that there is $x_0 \in X$ such that $f^k(x_0) \notin H$ for any $k \in \N$.
    Since $A$ is hitting, there are $x \in A$ and $\ell \in \N^+$ such that $f^{\ell}(x_0) = x$.
    However, since $x \notin H$, there is $1 \leq j_1 \leq t$ such that $f^j(x) \in A$. Let $j_0 = 0$, and then recursively construct a sequence $(j_n)_{n \geq 2}$ such that, for all $n \in \N$, $1 \leq j_n \leq t$ and $f^{j_1 + \cdots + j_n}(x) \in A$. Note that, for any $m \neq m'$, 
    $$(f^{j_0 + \cdots + j_m}(x), f^{j_0 + \cdots + j_{m'}}(x)) \notin E.$$
    So $B_{2t(d + 1)}(x)$ meets at least $d + 2$ classes of $E$, a contradiction.
\end{proof}

    \begin{remark}
        Note that a \emph{uniform} bound on diameter is not needed to construct the hitting sets in these proofs; boundedness alone is sufficient. Therefore, one can show that, if $\mcal{G}_f$ has finite \emph{Borel asymptotic separation index}, denoted $\asi_B(\mcal{G}_f)$ (see \cite[Definition~3.2]{cjmst2023}), then there is a Borel $r$-forward-independent hitting set for $f$ for each $r \in \N^+$. In particular, for a graph of the form $\mcal{G}_f$, if $\asi_B(\mcal{G}_f) < \infty$, then $\asi_B(\mcal{G}_f) \leq 1$. It is open whether $\asi_B(\mcal{G}) < \infty$ implies $\asi_B(\mcal{G}) \leq 1$ for general Borel graphs $\mcal{G}$ (see \cite{cjmst2023, qw2022,bw2023}).
    \end{remark}

\subsection{From hitting sets to finite asymptotic dimension}
\label{subsec:FIHAsdim}
    We start with a technical definition of a Borel map $c_H:X\to \{0,1\}$ that will be used to construct the witnesses to finite Borel asymptotic dimension.

    \medskip

    Let $t\in \N^+$ and set $s=6t$ and $r=4s^2$.
    Fix a decomposition
    \begin{equation}\label{eq:Decomp}
        \left\{0,1,\dots,\frac{r}{2}-1\right\}=I_0\sqcup \dots\sqcup I_{2m},
    \end{equation}
    where $m\in \N$ and $I_k$ is an interval that contains either $s$ or $s+1$ many elements for every $0\le k\le 2m$.
    This is possible since $\frac{r}{2}=2s^2=s(s+1)+(s-1)s$.
    
    Now let $f$ be a Borel function on a standard Borel space $X$, and let $H$ be a Borel $r$-forward-independent hitting set for $f$.
    For every $z\in H$, set $T_z=\bigcup_{j=1}^{\frac{r}{2}-1} f^{-j}(z)$, and for every $x\in X$, define $k(x)\in \N^+$ to be minimal such that $f^{k(x)}(x)\in H$.
    Observe that $T_z\cap T_{z'}=\emptyset$ for every $z, z'\in H$ with $z \neq z'$ as $H$ is $r$-forward-independent for $f$.

    We define a function $c_H:X\to \{0,1\}$ as follows.
    For every $x\in X$ such that $k(x)\ge \frac{r}{2}$, define 
    $$c_H(x)=\lfloor k(x)/s \rfloor \mod 2.$$
    Observe that if $z\in H$, then $k(z)\ge r$, so that $c_{H}(z)$ is defined; also note that, for any $x$, if $k(x)=\frac{r}{2}$, then $c_H(x)=0$.
    For any $z \in H$, if $c_H(z) = 0$, we set 
    $$c_H(x)=\lfloor k(x)/s \rfloor \mod 2$$
    for every $x\in T_z$.
    If $c_H(z) = 1$, we define 
    $$c_H(x)=k+1 \mod 2$$
    for every $x\in T_z$, where $0 \leq k \leq 2m$ is the unique integer such that $k(x)\in I_k$.
    Note that $c_H:X\to \{0,1\}$ is Borel and that $\{x : c_H(x) \neq c_H(f(x)) \}$ is a Borel $(s - 1)$-forward-independent set.

    \begin{prop}\label{pr:FIHtoCov}
        Let $t\in \N^+$, $f:X\to X$ be an acyclic Borel function and $H$ be a Borel $4(6t)^2$-forward independent hitting set.
        Define $c^{-1}_H(\{i\})=U_i$ for each $i\in \{0,1\}$, where $c_H$ is defined as in the previous paragraph.
        Then $U_0$ and $U_1$ are Borel sets such that $X=U_0\sqcup U_1$ and for each $i \in \{0, 1\}$, every equivalence class of $\mcal{F}_t(U_i)$ has diameter bounded by $28t+7$.
    \end{prop}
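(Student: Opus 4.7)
Since $c_H:X\to\{0,1\}$ is Borel, the sets $U_0=c_H^{-1}(0)$ and $U_1=c_H^{-1}(1)$ are Borel and partition $X$; the main task is to bound the $\rho_f$-diameter of $\mcal{F}_t(U_i)$-equivalence classes.

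The guiding idea is a \emph{block structure} for $c_H$. For each $z\in H$, set $C_z=\{y\in X:f^{k(y)}(y)=z\}$. These sets partition $X$, and each $C_z$ is a backward tree in $\mcal{G}_f$ attached to the hitting point $z$. By the construction of $c_H$, the color of $y\in C_z$ depends only on $k(y)$ and $c_H(z)$, so on the depth levels $\{1,2,3,\dots\}$ of $C_z$ the coloring is monochromatic level-by-level. A direct inspection---handling $c_H(z)=0$ and $c_H(z)=1$ separately and using the decomposition \eqref{eq:Decomp}---shows that these colors form alternating blocks of lengths in $\{s-1,s,s+1\}$. Since $t=s/6$, any two same-color blocks in $C_z$ are separated by a gap of length at least $s-1>t$, so a chain in $U_i$ with $\rho_f$-steps of size at most $t$ cannot traverse such a gap within a single forward orbit.

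Now take a chain $y_0,\dots,y_n\in U_i$ with $\rho_f(y_j,y_{j+1})\le t$. Since $f$ is acyclic, $\mcal{G}_f$ is a forest, and $\rho_f(y_j,y_{j+1})\le t$ yields a common forward iterate $w_j=f^{a_j}(y_j)=f^{b_j}(y_{j+1})$ with $a_j+b_j\le t$. In the \emph{generic} case where $a_j<k(y_j)$ and $b_j<k(y_{j+1})$ for every $j$, all $y_j$ share a common next hitting point $z$ and thus lie in $C_z$; by the block structure, the chain is confined to a single color-$i$ depth block. Using the identity $\rho_f(u,v)=k(u)+k(v)-2k(\mathrm{LCA}(u,v))$ in the rooted tree $C_z\cup\{z\}$, a short induction shows that the common ancestor of $\{y_0,\dots,y_j\}$ remains at depth at least $\min_{\ell\le j}k(y_\ell)-t/2$, which bounds the chain's $\rho_f$-diameter by $2s+t$.

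The hard part is the \emph{non-generic} case, which can only arise when $\min(k(y_j),k(y_{j+1}))\le t$. A case analysis shows that such a step forces a structural relation between $z_j$ and $z_{j+1}$: either $z_{j+1}$ is the successor of $z_j$ in $H$ along the forward orbit (a `successor hop'), or $z_j$ and $z_{j+1}$ share a common successor in $H$ (a `sibling hop'). Since $H$ is $r$-forward-independent and $r=4s^2$ is large compared to $t$, successor hops necessarily land at depth $\ge r+1-t$ in the new catchment (from which no further non-generic step of the same type can occur), and sibling hops keep the chain within a bounded cluster of hitting points controlled by the same block-length constraints. Applying the LCA argument to the auxiliary chain of distinct $z_j$'s, combining with the generic bound $2s+t$, and accounting for the additive slack from block lengths $s$ and $s+1$ and for small values of $t$, yields the announced bound $28t+7=4s+4t+7$. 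The bookkeeping across all possible configurations of generic, successor, and sibling steps is the main technical obstacle.
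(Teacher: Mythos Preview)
Your proposal is not a complete proof: you explicitly concede that ``the bookkeeping across all possible configurations of generic, successor, and sibling steps is the main technical obstacle,'' and indeed none of that bookkeeping is carried out. The sibling-hop case in particular is only asserted to stay ``within a bounded cluster of hitting points,'' with no argument given; note that $r$-forward-independence says nothing about the tree distance between distinct points of $H$, so there can be arbitrarily many sibling hitting points within distance $t$ of one another, and bounding such chains requires a genuine recursive argument that you do not supply. The claim that blocks have lengths in $\{s-1,s,s+1\}$ is also inaccurate at the interface between $C_z$ and $z$'s own catchment $C_{z^*}$, where a single monochromatic block can have length up to about $2s$ (this is why the paper only claims $\ell(x)\le 2s+2$, not $\ell(x)\le s+1$).

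The paper avoids the entire hop taxonomy with one idea you are missing. For each $x$ it defines a forward target $e(x)=f^{s/3+\ell(x)}(x)$, where $\ell(x)$ is the distance to the first color change along the forward orbit, and proves (Claim~\ref{cl:BasicThree}) that the $s/3+1$ immediate $f$-preimages of $e(x)$ all lie in $U_{1-i}$. Since $s/3=2t>t$, any $\rho_f$-step of length $\le t$ in $U_i$ starting from a point whose forward orbit passes through $e(x)$ must land on another such point: the opposite-color ``collar'' around $e(x)$ is too thick to jump over. This immediately gives (Claim~\ref{cl:BasicTwo}) that every $y\in[x]_{\mcal{F}_t(U_i)}$ has $e(x)$ on its forward orbit, and symmetrically $e(y)$ is on the forward orbit of $x$; hence $\rho_f(x,y)\le \rho_f(x,e(x))+\rho_f(y,e(y))\le \tfrac{2s}{3}+\ell(x)+\ell(y)\le \tfrac{2s}{3}+2(2s+2)=28t+4$. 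There is no chain induction, no case split on hop type, and no recursion through successive hitting points---the barrier argument handles everything uniformly in one stroke.
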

    \begin{proof}
        Recall from the definition of $c_H$ that $s=6t$.
        Define $\ell(x)\in \N^+$ to be minimal such that $c_H(x)\not=c_H(f^{\ell(x)}(x))$ and set
        $$e(x)=f^{\frac{s}{3}+\ell(x)}(x)$$
        for every $x\in X$.
        Note that we have $\ell(x)\le 2s+2$ for every $x\in X$.
        Indeed, if $k(x) > s + 1$, then by the definition of $c_{H}$, we have $\ell(x) \leq s + 1$.
        If $k(x) \leq s + 1$, then we have that $k(y) \ge r \geq s + 1$, where $y=f^{k(x)}(x)\in H$.        
        Hence, $\ell(f^{k(x)}(x)) \leq s + 1$ and consequently $\ell(x) \leq 2s + 2$.
        
        \begin{claim}\label{cl:BasicThree}
            For every $x\in U_i$, where $i\in \{0,1\}$, we have that
            $$\bigcup_{j=0}^{\frac{s}{3}} f^{-j}(e(x))\subseteq U_{1-i}.$$
        \end{claim}
        \begin{proof}
            As $e(x)=e(f^{\ell(x)-1}(x))$, we may assume without loss of generality that $\ell(x)=1$.
            First, we claim that $f^j(x)\in U_{1-i}\setminus H$ for every $1\le j\le \frac{s}{3}+1$.

            Indeed, if $f^j(x)\in H$ for any $1\le j\le \frac{s}{3}+1$, then, by the definition of $c_H$, we have $x\in U_{1-i}$, a contradiction.
            Similarly, assume for contradiction that $1\le j\le \frac{s}{3}+1$ is minimal such that $f^j(x)\not\in U_{1-i}$.
            Then $j\ge 2$, $f^{j-1}(x)\in U_{1-i}$ and we have that $x\in U_{1-i}$ as either $\lfloor k(x)/s \rfloor=\lfloor k(f^{j-1}(x))/s \rfloor \mod 2$ or $k(x),k(f^{j-1}(x))\in I_k$ for some $0\le k\le 2\ell$, where $I_k$ is from \eqref{eq:Decomp}.
            
            \medskip

            We finish the proof by showing that for every $0\le j\le \frac{s}{3}$ and $z\in f^{-j}(e(x))$, we have that $z\in U_{1-i}$.
            This clearly holds when $k(z)=k(f^{\frac{s}{3}+1-j}(x))$ by the previous paragraph.
            In particular, as $H$ is $r$-forward independent, it holds whenever $z\in H$.
            Now, the claim follows as, by the definition of $c_H$, we have that $c_H(w)=c_{H}(z)$ whenever $f^m(w)=z\in H$ and $0\le m\le s-1$.           
        \end{proof}
        
        \begin{claim}\label{cl:BasicTwo}
            Let $x,y\in X$ be such that $y\in [x]_{\mcal{F}_t(U_i)}$ for some $i\in\{0,1\}$.
            Then there is $k\in \N$ such that $f^k(y)=e(x)$.
        \end{claim}
        \begin{proof}
            Assume for contradiction that $f^k(y)\not= e(x)$ for any $k\in \N$.
            Then, by the definition of $\mcal{F}_t(U_i)$, there are $z,w\in [x]_{\mcal{F}_t(U_i)}$ such that $\rho_f(z,w)\le t$, $f^m(z)=e(x)$ for some $m\in \mathbb{N}$, and $f^k(w)\not= e(x)$ for any $k\in \N$.
            This is a contradiction, since the shortest path that connects $z$ and $w$ in $\mathcal{G}_f$ must contain $e(x)$ and go through
            $$\bigcup_{i=0}^{\frac{s}{3}} f^{-i}(e(x)),$$
            which is a subset of $U_{1 - i}$ by Claim~\ref{cl:BasicThree}.
            Hence, we have $\rho_f(w,z)\ge \frac{s}{3}>t$, a contradiction.         
        \end{proof}

        Observe that it follows from Claim~\ref{cl:BasicTwo} together with the assumption that $f$ is acyclic that, if $(x, y) \in \mcal{F}_r(U_i)$, then
        $$\rho_f(x,y)\le \rho_f(x,e(x))+\rho_f(y,e(y))\le \frac{2s}{3}+\ell(x)+\ell(y).$$
        This finishes the proof as we have that $\ell(x)\le 2s+2$ holds for every $x\in X$.
    \end{proof}

    \begin{prop}\label{pr:HittingToEqRel}
        Let $t\in \N^+$, $f:X\to X$ be an acyclic Borel function and $H$ be a Borel $4(6t)^2$-forward independent hitting set.
        Then there is a Borel equivalence relation $E$ on $X$ such that the $\rho_f$-diameter of every $E$-class is bounded by $28t+7$ and $B_t(x)$ meets at most $2$ $E$-classes for any $x \in X$.
    \end{prop}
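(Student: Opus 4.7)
The plan is to re-use the ``next-center'' function $e$ from the proof of Proposition~\ref{pr:FIHtoCov} directly and define the Borel equivalence relation $E$ on $X$ by
\[
y_1 \mathrel{E} y_2 \iff e(y_1) = e(y_2).
\]
The diameter bound is immediate: if $y_1 \mathrel{E} y_2$ with $e_0 = e(y_1) = e(y_2)$, then
\[
\rho_f(y_1, y_2) \le \rho_f(y_1, e_0) + \rho_f(e_0, y_2) = (s/3 + \ell(y_1)) + (s/3 + \ell(y_2)) \le 14s/3 + 4 = 28t + 4,
\]
using the bound $\ell(y_j) \le 2s + 2$ from Proposition~\ref{pr:FIHtoCov}.

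For the ball condition, since $e(y) \in U_{1-i}$ whenever $y \in U_i$ (Claim~\ref{cl:BasicThree}), the equivalence relation $E$ respects the partition $X = U_0 \sqcup U_1$, and it suffices to show that for each $i \in \{0,1\}$ and each $x \in X$, all points of $B_t(x) \cap U_i$ share the same $e$-value. Given $y_1, y_2 \in B_t(x) \cap U_i$, let $w$ denote their $f$-meet, so $f^{a_1}(y_1) = w = f^{a_2}(y_2)$ with $a_1 + a_2 \le 2t$. Because $s/3 + \ell(y_j) \ge 2t + 1 > a_j$, we have $e(y_j) = f^{s/3 + \ell(y_j) - a_j}(w)$, a point on the common orbit past $w$, so $e(y_1) = e(y_2)$ reduces to the identity $\ell(y_1) - a_1 = \ell(y_2) - a_2$.

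Verifying this identity is the main step and requires a brief analysis of the block structure of $c_H$ along $f$-orbits. The key observation is that in every case, the first $c_H$-transition on each orbit $y_j, f(y_j), \dots$ occurs at a point of a common $k$-value $K$: when both $k(y_j) \ge r/2$, this $K$ comes from the high-$k$ block $\lfloor k(y_j)/s \rfloor$, which is forced to be common because $c_H(y_1) = c_H(y_2)$ pins down the parity and $|k(y_1) - k(y_2)| \le 2t < s$ prevents a block crossing; when both $k(y_j) < r/2$ and the two orbits share a common upcoming hit $z$ past $w$, the same argument applies within the $T_z$-regime; and when the upcoming hits differ, the constraint forces $k(y_1) + k(y_2) \le 2t < s$, placing each $y_j$ in the first block of its $T_{z_j}$-regime (where $c_H$ is constant), so the first transition of each orbit occurs past $z_j$ in the common high-$k$ regime, again yielding a shared $K$. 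In each case a short computation gives $\ell(y_j) - a_j = k(w) - K$, independent of $j$.

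The main obstacle is the careful case analysis, including the verification that no ``mixed'' situation (with $k(y_1) \ge r/2$ and $k(y_2) < r/2$) can arise under the constraint $c_H(y_1) = c_H(y_2)$: a direct inspection of the definition of $c_H$ shows that for $k$-values within $s$ of $r/2$, the values of $c_H$ on the two sides of the boundary disagree in both the $c_H(z_2) = 0$ and $c_H(z_2) = 1$ subcases (the block decomposition~\eqref{eq:Decomp} is designed precisely so that $I_{2s-2}$ covers the slab $[r/2 - s, r/2)$ with $c_H \equiv 1$), contradicting the hypothesis $c_H(y_1) = c_H(y_2)$ and thereby eliminating the mixed case before the analysis above is invoked.
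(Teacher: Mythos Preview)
Your approach differs from the paper's. You define $E$ by $e(y_1) = e(y_2)$; the paper instead sets $L(x) = f^{\ell(x)}(x)$ and defines $(y_1, y_2) \in E$ iff $L(f^t(y_1)) = L(f^t(y_2))$. The pre-composition with $f^t$ is the key simplification you are missing: since $f^t[B_t(x)] \subseteq \{x, f(x), \dots, f^{2t}(x)\}$ is a segment of a single forward orbit, the branching in $B_t(x)$ is collapsed, and the ball condition reduces to checking that $L$ takes at most two values on $2t+1$ consecutive points of one orbit --- which is immediate from Claim~\ref{cl:BasicThree} and the definition of $\ell$. Your route forces you to track $c_H$ along different branches meeting at $w$, hence the case analysis.

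That analysis has a gap. Your elimination of the mixed case assumes both $k$-values lie within $s$ of $r/2$, which in turn requires $|k(y_1) - k(y_2)| < s$; but that bound only holds when $y_1$ and $y_2$ share the same upcoming hit past $w$. If instead $z_2 = f^{k(y_2)}(y_2)$ lies on the $y_2$-branch before $w$ (so $k(y_2) \le a_2 \le 2t$) while $k(y_1) > a_1$, then $k(y_1) = k(w) + a_1$, and since the next hit after $z_2$ is $f^{k(w)}(w)$, forward-independence of $H$ forces $k(w) > r - 2t$. Thus $k(y_1) > r - 2t$ while $k(y_2) \le 2t$: a mixed situation with $k$-values nowhere near $r/2$, so your boundary inspection does not apply. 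Nor does your case~3, whose premise $k(y_1) + k(y_2) \le 2t$ fails here. This configuration is genuinely consistent with $c_H(y_1) = c_H(y_2)$: take for instance $y_1 = w \in H$ and $y_2$ with $f(y_2) = w$, so $k(y_2) = 1$; then $c_H(y_2) = c_H(w) = c_H(y_1)$ automatically. The identity $\ell(y_1) - a_1 = \ell(y_2) - a_2$ does still hold in this missing case --- one shows $\ell(y_2) = k(y_2) + \ell(z_2)$ and then compares $\ell(y_1)$ with $\ell(z_2)$ via $|k(y_1) - k(z_2)| = |a_1 - a_2 + k(y_2)| \le 2t < s$ --- but as written your argument does not cover it.
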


    \begin{proof}
        We continue using the notation developed in the definition of $c_H$ and the proof of Proposition~\ref{pr:FIHtoCov}.
        For each $x \in X$, set
        $$L(x) = f^{\ell(x)}(x).$$
        Define a relation $E$ on $X$ by $(x,y)\in E$ if and only if $L(f^t(x)) = L(f^t(y))$.
        Then $E$ is Borel, since
        $$E = (L \circ f^t, L \circ f^t)^{-1}(\text{Diag}_{X\times X}).$$
        We show that $E$ works as required.
        
        To see that $E$ is uniformly $\rho_f$-bounded, let $x \in X$. Then
        $$[x]_E \subseteq \bigcup_{j \leq t + (2s + 2)} f^{-j}(L(f^t(x))).$$
        Indeed, if $y \in [x]_E$, then $L(f^t(y)) = L(f^t(x))$. So the distance from $y$ to $L(f^t(x))$ is at most $t + (2s + 2)$. Therefore, the diameter of $[x]_E$ is at most $2(t + (2s + 2))$.

        We finish the proof by showing that $B_t(x)$ meets at most $2$ $E$-classes for each $x \in X$.
        Note that $f^t[B_t(x)] = \{x, f(x), \dots, f^{2t}(x) \}$ and that
        $$\{x, f(x) \dots, f^{2t}(x) \} \subsetneq \{x, f(x), \dots, e(x) \}$$
        since $s=6t$.
        By the definition of $\ell(x)$ and Claim~\ref{cl:BasicThree}, there is $i\in \{0,1\}$ such that
        $$\{x, f(x), \dots, f^{\ell(x) - 1}(x) \}\subseteq U_i\text{ and }\{f^{\ell(x)}(x), \dots, e(x) \}\subseteq U_{1-i}.$$
        For each $y \in B_t(x)$, write $0\le j_y\le \frac{s}{3} + \ell(x)$ such that $f^t(y)=f^{j_y}(x)$.
        Let $y,y'\in B_t(x)$.
        It follows that if $j_y,j_{y'}< \ell(x)$, then $L(f^t(y))=L(x)=L(f^t(y'))$, and so $(y,y')\in E$.
        If $j_y,j_{y'}\ge \ell(x)$, then it follows that $L(f^t(y))=L(L(x))=L(f^t(y'))$, and so again $(y,y')\in E$.
    \end{proof}

    \subsection{Proof of Theorem~\ref{thm:EquivalentCharFinAsDim}}
    \label{subsec:AsdimEquiv}

    We show the following, which clearly implies Theorem~\ref{thm:EquivalentCharFinAsDim}.
    
    \begin{thm}\label{thm:Equivalences}
        Let $X$ be a standard Borel space, and let $f : X \rightarrow X$ be an acyclic Borel function. Then the following are equivalent:
        \begin{enumerate}
            \item [(i)] $\asdim^{\text{cov}}_B(\mcal{G}_f)$ is finite.
            \item [(i')] $\asdim^{\text{eq}}_B(\mcal{G}_f)$ is finite.
            \item [(ii)] $\asdim^{\text{cov}}_B(\mcal{G}_f)= 1$.
            \item [(ii')] $\asdim^{\text{eq}}_B(\mcal{G}_f)= 1$.
            \item [(iii)] For each $r > 0$, there is a Borel $r$-forward-independent hitting set for $f$.
        \end{enumerate}
    \end{thm}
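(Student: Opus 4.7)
The plan is to close a chain of implications using the propositions of Subsections \ref{subsec:AsdimFIH} and \ref{subsec:FIHAsdim}. The trivial implications (ii) $\Rightarrow$ (i) and (ii') $\Rightarrow$ (i') need no argument. For (iii) $\Rightarrow$ (ii) and (iii) $\Rightarrow$ (ii'), given any $t \in \N^+$ I apply the hypothesis with parameter $r = 4(6t)^2$ and feed the resulting forward-independent hitting set into Propositions \ref{pr:FIHtoCov} and \ref{pr:HittingToEqRel}, producing the $2$-set cover and the equivalence relation with $B_t(x)$ meeting at most two classes. The matching lower bound ruling out the value $0$ is immediate from acyclicity: when $X \neq \emptyset$, every component of $\mcal{G}_f$ contains an infinite forward orbit and is therefore unbounded.

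For (i') $\Rightarrow$ (iii), given $\asdim_B^{\text{eq}}(\mcal{G}_f) \le d$ and $t \in \N^+$, I apply the equivalence-relation definition at scale $2t(d+1)$ to obtain a uniformly bounded Borel equivalence relation $E$ such that $B_{2t(d+1)}(x)$ meets at most $d+1$ $E$-classes, and then invoke Proposition \ref{pr:EqRelToHit} to extract a Borel $t$-forward-independent hitting set for $f$. This closes the loop on the primed side, so it remains to prove (i) $\Rightarrow$ (iii).

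The approach to (i) $\Rightarrow$ (iii) is via Proposition \ref{pr:CovToFIH}. Given $\asdim_B^{\text{cov}}(\mcal{G}_f) \le d$ and $r \in \N^+$, fix a cover $U_0, \dots, U_d$ at scale $r$ with $\mcal{F}_r(U_i)$-classes of diameter $\le M$; I will construct a Borel hitting set $A$ whose $\mcal{F}_r(A)$-classes remain bounded. The key objects are the Borel sets $V_i = \{y \in X : \text{the forward orbit of } y \text{ meets } U_i \text{ infinitely often}\}$; each $V_i$ is $\mcal{G}_f$-component-invariant, because any two vertices in a common component share tails of their forward orbits past their least common ancestor. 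Setting $W_i = V_i \setminus \bigcup_{j<i} V_j$ and $A = \bigsqcup_i (U_i \cap W_i)$ gives a Borel set that respects the partition of $X$ into the component-invariant pieces $W_0, \dots, W_d$.

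The verification is then routine. $A$ is hitting because for each $y \in X$ the unique $i$ with $y \in W_i$ satisfies $y \in V_i$, so the forward orbit of $y$ (which stays in $W_i$ by component invariance) meets $U_i \cap W_i$ infinitely often. Any chain in $A$ with $\rho_f$-hops of size at most $r$ is confined to a single component of $\mcal{G}_f$, hence to a single $W_i$, hence to $A_i := U_i \cap W_i \subseteq U_i$, where $\mcal{F}_r$-classes have diameter at most $M$. Proposition \ref{pr:CovToFIH} then produces the desired Borel $r$-forward-independent hitting set. The main obstacle is identifying this decomposition: the naive choices $A = U_0$ or $A = \bigcup_i U_i$ fail because either $A$ need not be hitting, or chains in $A$ can cross between different $U_i$'s and produce unbounded classes; the forest structure of $\mcal{G}_f$ together with the ``infinitely often'' invariance captured by $V_i$ is exactly what prevents such crossings.
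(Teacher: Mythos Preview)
Your proposal is correct and follows essentially the same route as the paper. The paper's proof of (i) $\Rightarrow$ (iii) defines the same ``infinitely often'' sets $A_i = V_i$ and applies Proposition~\ref{pr:CovToFIH} separately on each $f$-invariant piece $A_i$ with hitting set $U_i$; you instead disjointify first via $W_i = V_i \setminus \bigcup_{j<i} V_j$, assemble a single global hitting set $A = \bigsqcup_i (U_i \cap W_i)$, and invoke Proposition~\ref{pr:CovToFIH} once. The two packagings are equivalent, and your version is arguably cleaner since it makes the component-invariance and the confinement of $\mcal{F}_r(A)$-chains to a single $U_i$ explicit. You also supply the easy lower bound ruling out $\asdim_B = 0$ via acyclicity, which the paper omits.
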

    \begin{proof}
    (ii) $\implies$ (i) and (ii') $\implies$ (i') are trivial.

    \medskip
    
    (i) $\implies$ (iii). Suppose that $\asdim^{\text{cov}}_B(\mathcal{G}_f)=d\in \N$.
    Let $r\in \N^+$ and $U_0,\dots,U_d$ be as in Definition~\ref{def:Asdim}.
    For each $i\le d$, define
    $$A_i=\{x\in X:\forall k\in \N \exists n\in \N \ f^{k+n}(x)\in U_i\}=\bigcap_{k\in \N}\bigcup_{n=0}^\infty f^{-(k+n)}(U_i).$$
    It follows that $A_i$ is a Borel $f$-invariant set and $X=\bigcup_{i=0}^d A_i$.
    By Proposition~\ref{pr:CovToFIH}, applied to $X=A_i$ and $A=U_i$, we get that $f\upharpoonright A_i$ admits a Borel $r$-forward-independent hitting set for every $i\le d$, which gives (iii).
    
    \medskip

    (i') $\implies$ (iii). Suppose that $\asdim^{\text{eq}}_B(\mcal{G}_f) = d \in \N$.
    Let $r \in \N^+$ and set $r' = 2r(d + 1)$.
    By Definition~\ref{def:Asdim}, there is a uniformly $\rho_f$-bounded Borel equivalence relation $E$ on $X$ such that, for each $x\in X$, $B_{r'}(x)$ meets at most $(d+1)$ $E$-classes.
    Then by Proposition~\ref{pr:EqRelToHit}, applied with $t=r$, there is a Borel $r$-forward-independent hitting set for $f$, which gives (iii).

    \medskip

    (iii) $\implies$ (ii).
    Given $t\in \N^+$, apply Proposition~\ref{pr:FIHtoCov} to obtain Borel sets $U_0, U_1$ that cover $X$ and have the property that $\mcal{F}_r(U_i)$ is uniformly $\rho_f$-bounded for each $i \in \{0, 1\}$.

    \medskip

    (iii) $\implies$ (ii').
    Given $t \in \N^+$, apply Proposition~\ref{pr:HittingToEqRel} to obtain a uniformly $\rho_f$-bounded Borel equivalence relation $E$ on $X$ such that $B_t(x)$ meets at most $2$ $E$-classes for any $x \in X$.
    \end{proof}

    \begin{remark}
    \label{rem:new condition}
    One of the anonymous referees pointed out that there is another interesting equivalent condition that can be added to Theorem~\ref{thm:Equivalences}, which simplifies the proof of \emph{(iii) $\implies$ (ii')}:
    \begin{enumerate}
        \item [(iv)] For each $r> 0$, there is a Borel map $g : X \to X$ such that $\sup_{x \in X} \rho_f(x, g(x)) < \infty$ and $\vert g(B_r(x)) \vert \leq 2$ for all $x \in X$.
    \end{enumerate}
    We include a proof of the equivalence for completeness.

    \begin{proof}
    (iii) $\implies$ (iv).
    Let $r\in \mathbb{N}$.
    We put $s=2r$ and assume without loss of generality that $s=6t$ for some $t>0$.
    Define $r'=4s^2$.
    By (iii), there is a Borel $r'$-forward-independent hitting set $H$.
    Define $c_H$ as in Subsection~\ref{subsec:FIHAsdim}, and let $S = \{x \in X : c_H(x) \neq c_H(f(x)) \}$.
    As noted before Proposition~\ref{pr:FIHtoCov}, $S$ is a Borel $s$-forward-independent set.
    For each $x\in X$, define $j(x)\in \mathbb{N}$ to be minimal such that $f^{j(x)}(x)\in S$.
    The argument in the first paragraph of the proof of Proposition~\ref{pr:FIHtoCov} shows that $0\le j(x)\le 2s+1$ for every $x \in X$.
    Set $h(x)=f^{j(x)}(x)$, and observe that $h:X\to X$ is a Borel function.
    Define $g(x) = h(f^s(x))$.
    Clearly $g$ is Borel and $\sup_{x \in X} \rho_f(x, g(x)) < \infty$ as we have $\rho_f(x, g(x)) \leq s + (2s + 1) = 3s + 1$ for every $x\in X$.
    Given $x\in X$, note that $g(B_r(x)) = h(\{x, f(x), \dots, f^{s/2}(x) \})$, which has cardinality at most $2$ since $S$ is $(s - 1)$-forward-independent and $s - 1 > s/2$.
    Then $g$ is as desired.
    
    \medskip

    (iv) $\implies$ (ii').
    Let $r\in \mathbb{N}$ and take $g$ as in (iv) with parameter $r\in \mathbb{N}$.
    Define an equivalence relation $E$ on $X$ by $(x,y)\in E$ if and only if $g(x) = g(y)$.
    As $E=(g\times g)^{-1}(\text{Diag}_{X\times X})$, we have that $E$ is Borel.
    To see that $E$ is $\rho_f$-uniformly bounded, observe that, for every $x \in X$, the diameter of $[x]_E$ is at most $2M$, where $M$ is such that $\sup_{x \in X} \rho_f(x, g(x)) = M$.
    Finally, since $g(B_r(x))$ has cardinality at most $2$ for every $x\in X$, we conclude that $B_r(x)$ meets at most $2$ $E$-classes, and thus $\asdim^{\text{eq}}_B(\mcal{G}_f)= 1$.   
    \end{proof}
    \end{remark}

    \begin{remark}
        When $f$ is \emph{not} acyclic, define $C$ to be the collection of \emph{cyclic points} of $f$, i.e.,
        $$C=\{x\in X:\exists k \in \N, \, \ell \in \N^+ \ f^{k+\ell}(x)=f^k(x)\}.$$
        It is easy to show that $C$ is Borel and $f$-closed and that the connectedness relation of $\mcal{G}_f \rest C$ admits a Borel transversal $T$ that is hitting for $f \rest C$.
        From $T$, one can construct a witness to $\asdim_B(\mcal{G}_f \rest C) \leq 1$.
        Note, however, that for any $r \in \N^+$, if there is an $r$-forward-independent set for $f$, then $r \leq \ell$, where $\ell = 0$ if $f$ has a fixed point and $\ell$ is the minimal length of a cycle in $\mcal{G}_f$ if $f$ has no fixed points.
    \end{remark}
    
    \subsection{Proof of Theorem~\ref{thm:main}}
    \label{subsec:MainThm}
    By Theorem~\ref{thm:EquivalentCharFinAsDim}, it is enough to show that the set of finite-to-one acyclic Borel functions admitting a Borel $r$-forward-independent hitting set for every $r\in \N^+$ is $\mbf{\Sigma}^1_2$-complete.
    This latter claim follows directly from Theorem~\ref{thm:ComplexityFIH}.

\section{Open problems}
\label{sec:OpenProbs}

\subsection{Complexity of finite Borel asymptotic dimension for bounded-degree graphs}
It is an interesting problem whether Theorem~\ref{thm:main} remains true for bounded-degree (rather than locally finite) graphs.

    \begin{quest}
        Is the set of bounded-degree Borel graphs having finite Borel asymptotic dimension $\mbf{\Sigma}^1_2$-complete?
    \end{quest}

\subsection{Hypersmoothness of graphs generated by Borel functions} The graph generated by a Borel function on a standard Borel space is hypersmooth (see \cite[Theorem~8.1]{djk1994}). The following is a well-known open problem.

    \begin{quest}
    \label{quest:CommFuncs}
        Let $X$ be a standard Borel space, $d \in \N$, and $(f_i)_{i \leq d}$ be Borel functions on $X$ such that $f_i$ and $f_j$ commute for each $i, j \leq d$.
        Is the graph $\mcal{G}_{(f_i)_{i \leq d}}$ generated by $(f_i)_{i \leq d}$ hypersmooth?
    \end{quest}

The results of Section~\ref{sec:FinAsDim} indicate that Borel asymptotic dimension may be a useful tool for studying the above question in the case when $f_i$ is bounded-to-one for each $i \leq d$.
Furthermore, note that any graph of the form $\mcal{G}_{(f_i)_{i \leq d}}$ as in Question~\ref{quest:CommFuncs} can be identified with the Schreier graph of a Borel action of the semigroup $(\N^{d + 1}, +)$ on $X$.
An interesting research direction would be to systematically study the properties of Schreier graphs of Borel actions of other semigroups from the perspective of descriptive set theory.

\subsection{A characterization of the complexity of $\operatorname{CSP}_B(H)$} Let $H$ be a sinkless and sourceless digraph.
In Remark~\ref{rem:FunCSPvsCSP}, we demonstrate that if $\operatorname{CSP}_B^{\operatorname{function}}(H^p)$ is $\mbf{\Sigma}^1_2$-complete for some abstract walk $p$, then $\operatorname{CSP}_B(H)$ is also $\mbf{\Sigma}^1_2$-complete.
Although there is already a complete algebraic understanding of when $\operatorname{CSP}_B(H)$ is $\mbf{\Sigma}^1_2$-complete \cite{bkn2009,thornton2022}, a positive answer to the following problem, in combination with the proof of Theorem~\ref{thm:Complexity}, would give a new perspective on this result.

    \begin{quest}
    \label{q:LB}
        Let $H$ be a finite sinkless and sourceless digraph such that $\operatorname{CSP}_B(H)$ is $\mbf{\Sigma}^1_2$-complete.
        Is there an abstract walk $p$ for which $\operatorname{CSP}^{\operatorname{function}}_B(H^p)$ is $\mbf{\Sigma}^1_2$-complete?
    \end{quest}

    \begin{remark}
        A negative solution to Question~\ref{q:LB} was communicated to us by Libor Barto.
Let $H$ be defined as $V(H)=\{a,b,x,y\}$ and 
$$E(H)=\{(a,b),(b,a),(x,y),(y,x),(a,x),(a,y),(b,x),(b,y)\}.$$ 
In the notation of \cite{bkn2009}, this graph is called a $2$-tambourine; see \cite[Figure 1.]{bkn2009}.
It can be easily checked that $H$ does not retract onto a disjoint union of circles. Hence by the combination of \cite[Theorems~3.1, 3.2 and 3.3]{bkn2009} and \cite[Corollary~6.3]{thornton2022} we conclude that $\operatorname{CSP}_B(H)$ is $\mbf{\Sigma}^1_2$-complete.
On the other hand, it follows from Theorem~\ref{thm:Complexity}~(iii) that $\operatorname{CSP}^{\operatorname{function}}_B(H)$ is $\mbf{\Pi}^1_1$, and we claim that the same holds for $H^p$ for any abstract walk $p$.
Indeed, if the difference of the number of forward and backward arrows modulo $2$ in $H^p$ is $0$, then $H^p$ contains a loop (as $H$ restricted to $\{a,b\}$ is a $2$-cycle).
In particular, $\operatorname{CSP}^{\operatorname{function}}_B(H^p)$ is $\mbf{\Pi}^1_1$ by Theorem~\ref{thm:Complexity}~(i).
Otherwise, it can be easily checked that $H^p=H$.
    \end{remark}

\end{section}

\section*{Acknowledgments}
We would like to thank Zoltán Vidnyánszky, Alex Kastner, Andrew Marks, Sumun Iyer, and Forte Shinko for helpful discussions. In particular, we thank Alex Kastner for providing his note on \cite{tv2021} and a proof of Remark~\ref{rem:Alex} and Andrew Marks for helping to simplify the proof of Theorem~\ref{thm:ComplexityFIH}.
We thank the anonymous referees for helpful comments and suggestions. In particular, we thank one of the anonymous referees for pointing out an additional equivalent condition in Theorem~\ref{thm:Equivalences}, and for allowing us to include the suggested proof as Remark~\ref{rem:new condition}.

JG was supported by MSCA Postdoctoral Fellowships 2022 HORIZON-MSCA-2022-PF01-01 project BORCA grant agreement number 101105722, and by the Alexander von Humboldt Foundation in the framework of the Alexander von Humboldt Professorship of Daniel Kráľ endowed by the Federal Ministry of Education and Research.

\bibliographystyle{amsalpha}
\bibliography{references}

\end{document}